\documentclass{amsart}

\usepackage{graphicx}
\usepackage{amssymb, amsmath, amsthm}
\usepackage{color}
\usepackage{hyperref}

\newtheorem{lemma}{Lemma}
\newtheorem{proposition}[lemma]{Proposition}
\newtheorem{theorem}[lemma]{Theorem}
\newtheorem{claim}[lemma]{Claim}
\newtheorem{definition}[lemma]{Definition}
\newtheorem{corollary}[lemma]{Corollary}

\newcommand{\R}{\mathbb R}
\renewcommand{\S}{\mathbb S}

\newcommand{\pd}{\partial}

\newcommand{\ve}{\varepsilon}

\newcommand{\rncyl}{r_{n}}
\newcommand{\cyl}{\mathcal C}
\newcommand{\pla}{\mathcal P}

\begin{document}

\title{Shrinking doughnuts via variational methods}


\author{Gregory Drugan}
\address{Oregon Episcopal School, 6300 SW Nicol Road, Portland, OR 97223}
\email{drugan.math@gmail.com}

\author{Xuan Hien Nguyen}
\address{Department of Mathematics, Iowa State University, Ames, IA 50011}
\email{xhnguyen@iastate.edu}

\begin{abstract}
We use variational methods and a modified curvature flow to give an alternative proof of the existence of a self-shrinking torus under mean curvature flow. As a consequence of the proof, we establish an upper bound for the weighted energy of our shrinking doughnuts. 
\end{abstract}

\maketitle


\section{Introduction}

Let us consider the half-plane
	$
	\R^2_+ = \{ (r,x): r>0, \, x \in \R \}
	$
equipped with the metric 
	\[
	g =  \lambda^2 g_E = \lambda^2(dr^2 + dx^2), \quad \lambda := r^{n-1} e^{-\frac{1}{4} (x^2 + r^2)}, \quad n\geq 2
	\]
where $g_E$ is the Euclidean metric on $\R^2$. The main result of this article is the following: 
\begin{theorem}
\label{thm:main}
There exists a simple closed geodesic $\gamma_\infty (u)= (r(u), x(u))$, $u \in \S^1$, in the half-plane $(\R^2_+ , g)$.  Moreover, its length $L_n(\gamma_{\infty})$ in the metric $g$ is less than the length of the double cover of the half-line $x=0$: 
	\[
	L_n(\gamma_{\infty})
	< 2 \int_0^\infty s^{n-1} e^{-s^2/4} ds.
	\]
\end{theorem}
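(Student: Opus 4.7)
The weighted length functional $L_n(\gamma) = \int_\gamma r^{n-1} e^{-(x^2+r^2)/4} \, ds_E$ is precisely the $F$-energy of the rotational hypersurface in $\R^{n+1}$ generated by revolving $\gamma$ around the $x$-axis, so critical points of $L_n$ correspond to self-shrinkers. My plan is to produce $\gamma_\infty$ as the long-time limit of a length-decreasing curvature flow of simple closed curves in $(\R^2_+, g)$, starting from a carefully chosen competitor, so that the stated upper bound is inherited directly from that competitor.

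First, I would construct an explicit simple closed curve $\gamma_0$ with $L_n(\gamma_0)$ already strictly below $2\int_0^\infty s^{n-1}e^{-s^2/4}\,ds$. A natural candidate is a thin stadium (or smoothed rectangle) hugging the segment $\{x=0,\, \ve \le r \le R\}$ at height $|x| = \delta > 0$: the two horizontal sides contribute at most $2 e^{-\delta^2/4} \int_\ve^R s^{n-1} e^{-s^2/4}\,ds$, which is strictly below the target for any $\delta > 0$, while the two short vertical caps at $r=\ve$ and $r=R$ contribute arbitrarily little as $\ve \to 0$ and $R \to \infty$, since $\lambda = r^{n-1}e^{-(x^2+r^2)/4}$ vanishes at both ends (using $n\ge 2$).

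Next, I would evolve $\gamma_0$ by a geodesic curvature flow in $(\R^2_+, g)$, say $\pd_t \gamma = (\kappa_g - c(t))\nu$, where $\nu$ is an inward unit normal in $g$ and $c(t)$ is an optional Lagrange-multiplier correction (the \emph{modification} alluded to in the abstract). This flow is parabolic, admits short-time existence, preserves embeddedness by a Huisken--Grayson-type argument adapted to conformal backgrounds, and satisfies $\tfrac{d}{dt}L_n = -\int (\kappa_g - c)^2 \, ds_g \le 0$. Because the length stays strictly below the threshold throughout the flow and $\lambda$ vanishes at the two ends of $\R^2_+$ (namely $r\to 0$ and $r^2+x^2\to\infty$), the evolving curves remain in a fixed compact subset of the half-plane. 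Integrating the monotonicity in time and invoking Blaschke-type compactness, I would extract a subsequential smooth limit $\gamma_\infty$: a simple closed curve with $\kappa_g \equiv c_\infty$, which under a suitable choice of $c(t)$ preserving a topological constraint becomes a genuine geodesic.

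The main obstacle will be ruling out two modes of degeneration of the flow. The first is collapse to a point; this is precisely why the flow must be \emph{modified}, most naturally by a constraint that fixes the enclosed region topologically (e.g., a prescribed enclosed $g$-area, or a fixed enclosed point $(r_0,0)$ with $r_0>0$). The second is degeneration into the double cover of $\{x=0\}$, which would saturate rather than strictly undershoot the stated bound; this mode is precisely ruled out by the strict inequality $L_n(\gamma_0) < 2 \int_0^\infty s^{n-1} e^{-s^2/4}\,ds$ being carried through the flow by monotonicity. Arranging both simultaneously---a modification that forbids point-collapse while remaining compatible with both the strict length bound and embeddedness---is, I expect, the heart of the argument.
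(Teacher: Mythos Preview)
Your high-level strategy matches the paper's: run a length-decreasing curvature-type flow from an initial curve with length already below $2L_{n,\pla}$, and use monotonicity to exclude degeneration to the doubled plane. But two of your load-bearing steps are genuine gaps.

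\textbf{The compactness claim is false as stated.} You assert that because $\lambda$ vanishes as $r\to 0$ and as $r^2+x^2\to\infty$, a bound on $L_n(\gamma_t)$ forces the curves to remain in a fixed compact subset of $\R^2_+$. This does not follow: precisely \emph{because} $\lambda$ is small near the ends, a closed curve sitting entirely near $r=0$ (or near $r=\infty$) can have arbitrarily small $L_n$. Nothing in a length bound alone prevents the flow from drifting off to either end. The paper handles this by an entirely different mechanism: it constructs not one initial curve but a continuous one-parameter family $\Phi(a,0)$, each enclosing Gauss area exactly $2\pi$. For small $a$ the flow eventually lies left of the geodesic cylinder $\mathcal C=\{r=\sqrt{2(n-1)}\}$, for large $a$ it lies right of $\mathcal C$; by connectedness there is some $a_0$ whose evolution intersects $\mathcal C$ for \emph{all} time. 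This anchoring to $\mathcal C$, together with the Gauss-area constraint and a detailed analysis of graphical geodesics (to rule out escape while still touching $\mathcal C$), is what actually yields compactness. Your single-curve approach has no replacement for this topological selection argument.

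\textbf{The modification you propose does not produce a geodesic.} A flow $\partial_t\gamma=(\kappa_g-c(t))\nu$ with a Lagrange multiplier has as stationary points curves of constant geodesic curvature $c_\infty$, not geodesics, and you give no mechanism forcing $c_\infty=0$. The paper instead uses Gage's flow $V_g=k_g/K$: its stationary points are exactly geodesics, length decreases by $\frac{d}{dt}L_n=-\int k_g^2/K\,ds$, and the enclosed Gauss area $\iint_\Omega K\,dA$ satisfies $\frac{d}{dt}\iint K\,dA=\iint K\,dA-2\pi$, so the value $2\pi$ is preserved. By Gauss--Bonnet this is $\oint k_g\,ds=0$, which combined with Oaks' theorem gives long-time existence without collapse. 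Your ``enclosed $g$-area'' or ``enclosed point'' constraints do not interact with Gauss--Bonnet in this way and leave both long-time existence and the vanishing of $c_\infty$ unaddressed.
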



When a geodesic such as $\gamma_{\infty}$ is rotated around the $x$-axis, it generates an $n$-dimensional hypersurface $\Sigma$ parametrized by $X: \S^1 \times \S^{n-1} \to \R^{n+1}$:
	\[
	X (u,p) := (r(u)p, x(u)),
	\]
which is self-shrinking under mean curvature flow (see Angenent \cite{angenent;doughnuts}). Self-shrinking hypersurfaces are defined as solutions to the equation $\vec H_{\Sigma} = -\frac{1}{2} X^{\perp}$, where $\vec H$ is the mean curvature vector and $X^{\perp}$ is the projection of $X$ normal to $\Sigma$ and they are of interest because they model the fast-forming singularities of the mean curvature flow \cite{huisken;asymptotic-behavior}. 

Because $\gamma_{\infty}$ is simple and closed, Theorem \ref{thm:main} gives an embedded self-shrinking $\Sigma$ that is a topological torus for all $n\geq 2$. The existence of toroidal self-shrinkers was first proved by Angenent~\cite{angenent;doughnuts} using a shooting method for geodesics (see also Drugan~\cite{drugan;sphere} and Drugan--Kleene \cite{drugan-kleene} for immersed tori). Our proof here uses variational methods and we do not know if our tori coincide with Angenent's shrinking doughnuts.  Indeed, the uniqueness of the shrinking doughnuts is still open in all dimensions $n\geq 2$, 

The idea is to flow simple closed curves in the normal direction with speed $V_g=k_g/K$, where $k_g$ is the curvature and $K$ is the Gauss curvature. This flow was first introduced Poincar\'e \cite{poincare1905} and later studied by Gage \cite{gage1990}. Angenent \cite{angenent;parabolic-equations-curves1} and Oaks \cite{oaks;singularities} considered its generalizations. On manifolds with positive Gauss curvature, such as $(\R^2_+, g)$, the flow decreases lengths. Moreover, if a simple curve encloses a region $\Omega$ for which  the Gauss area $\iint_\Omega K$ is $2\pi$, all evolved curves satisfy the same property thanks to the Gauss-Bonnet formula, and the flow exists for all time. 

Our main contribution is a continuous family of initial rectangles $\Phi(a,0)$, $a \in \R^+$, each one enclosing a region with Gauss area equal to $2 \pi$ and with length less than the double cover of the half-plane $\{ x =0\}$ (and also of the shrinking cylinder $\mathcal C:=\{r=\sqrt{2(n-1)}\}$ by Proposition \ref{prop:Plane<Cylinder}). Because of the maximum principle, rectangles that do not intersect the self-shrinking cylinder $\cyl$ will not intersect it at a later time. For $a$ large, our curves $\Phi(a,0)$ are to the right of $\mathcal C$ and will stay there (in fact, they move further to the right). For $a$ small, our curves are to the left of $\mathcal C$ and get closer to the $x$-axis. Because the family of curves depend continuously on $a$, there is one curve that will not exit to either region and intersect the cylinder at all time. We show that the evolution of this particular curve converges to a geodesic along a subsequence $t_i \to \infty$. 

\begin{figure}[htb]
\begin{picture}(120,90)(0,0)
\linethickness{0.2mm}
\put(10,10){\line(0,1){80}}
\put(10,90){\vector(0,1){0.12}}
\linethickness{0.2mm}
\put(10,50){\line(1,0){110}}
\put(120,50){\vector(1,0){0.12}}

\linethickness{0.2mm}
\multiput(65,10)(0,1.95){40}{\line(0,1){0.90}}
\put(15,85){\makebox(0,0)[cc]{$x$}}

\put(115,45){\makebox(0,0)[cc]{}}

\put(115,45){\makebox(0,0)[cc]{}}

\put(45,75){\makebox(0,0)[cc]{}}

\put(115,45){\makebox(0,0)[cc]{$r$}}

\put(71,32){\makebox(0,0)[cc]{$\mathcal C$}}




%
%
\end{picture}
\caption{The self-shrinking cylinder $\mathcal C$ in $(\R^2_+, g)$}	
\end{figure}
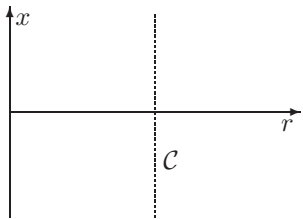

We need only a few simple facts to prove convergence to a geodesic. This is the strength of our approach. The bulk of the work resides in proving that the length of the initial curves are small enough. In lower dimensions, the computation of the lengths can be done numerically. To prove the result for all dimensions, we use an induction argument and long expansions in power series. We converted delicate work on differential equations to estimates of length. What is lost in elegance might be gained in practicality. Indeed, numerics could provide results in lower dimension and different contexts. Even though our proof relies on powerful theorems for the long-time existence of the flow, these theorems can be applied in many situations. 

The article is structured as follows. In Section~\ref{sec:formulas}, we gather useful facts, such as the curvature and length of a curve, the equation for geodesics in $(\R^2_+, g)$, and the Gauss-Bonnet formula. In Section~\ref{sec:csf}, we introduce Gage's modified curve shortening flow and check its short-time existence.  The long-time existence for simple closed initial curves enclosing Gauss area of $2 \pi$ is obtained from a result of Oaks \cite{oaks;singularities} using the evolution of lines $r = C$ as barriers. In Section~\ref{sec:rectangles}, we prove that every rectangle that is symmetric with respect to the $r$-axis and has height $2 c_0$ for a well-chosen $c_0>0$ has length less than the double cover of the half-plane. From the rectangles, we choose a one parameter family of initial curves which enclose a Gauss area of $2 \pi$, then select one $\gamma_0$ whose evolution will intersect the cylinder $\cyl$ at all time. In Section~\ref{sec:converge}, we prove that the flow of $\gamma_0$ converges to a simple closed geodesic along a subsequence of times $t_i \to \infty$. Finally, in the Appendix, we prove some basic facts on the behavior of $\lambda$ that are used in Section~\ref{sec:rectangles}. 

\subsection*{Acknowledgment} The authors would like to thank Stephen Kleene and Sigurd Angenent. Stephen Kleene introduced the authors in the hope to solve a related problem. That problem is still open and morphed into this one. Sigurd Angenent generously shared his expertise on the existence, properties, and convergence of parabolic flows.  


\section{Curvature, geodesics, length, and the Gauss-Bonnet formula}
\label{sec:formulas}

Consider the half-plane
	$
	\R^2_+ = \{ (r,x): r>0, \, x \in \R \}
	$
equipped with the metric 
	\[
	g =  \lambda^2 g_E = \lambda^2(dr^2 + dx^2), \quad \lambda := r^{(n-1)} e^{-\frac{1}{4} (x^2 + r^2)}, \quad n\geq 2
	\]
where $g_E$ is the Euclidean metric on $\R^2$.

\subsection*{Notation.}
The vectors $\partial /\partial r$ and $\partial/\partial x$ form an orthonormal basis in $\R^{2}$ for the usual Euclidean metric; they have length $\lambda$ in the metric $g$. To avoid confusion, we denote with a subscript $g$ geometric quantities and unit vectors taken with respect to the metric $g$ and we use a subscript $E$ when we refer to the Euclidean metric $g_E$.

\subsection*{Curvature and geodesic equation}
Given a curve $\gamma(u) = (r(u), x(u))$ in $\R^2_+$, the speed, unit tangent and normal vectors are given by 
	\begin{gather}
	v = \lambda \sqrt{(x')^2 + (r')^2}, \quad 
	\label{eq:TangentNormal}\mathbf t_g= \frac{1}{v} \left(r' \frac{\pd}{\pd r} + x' \frac{\pd}{\pd x}\right), \quad 
	 \mathbf n_g = \frac{1}{v} \left(-x' \frac{\pd}{\pd r} + r' \frac{\pd}{\pd x}\right).
	\end{gather}
For later computations, we record that $ds = v\ du$ where $s$ is the arclength for $\gamma$ in $(\R^2_+,g)$. The geodesic curvature is 
	\begin{align}
	\label{eq:GeodesicCurvature}
	k_g &= \frac{1}{v} \left[ \frac{x' r'' - x'' r'}{(x')^2 + (r')^2}  - \left( \frac{n-1}{r}  -\frac{r}{2} \right) x' - \frac{1}{2}xr' \right] \\
	& = \frac{1}{v} \left[ k_E  - \left( \frac{n-1}{r}  -\frac{r}{2} \right) x' - \frac{1}{2}xr' \right]. \notag
	\end{align} 
Consequently, the geodesic equation for $(\R^2_+, g)$ is
\begin{equation}
\label{eq:GeodesicEquation}
\frac{x' r'' - x'' r'}{x'^2 + r'^2}  = \left( \frac{n-1}{r}  -\frac{r}{2} \right) x' + \frac{1}{2}xr'.
\end{equation}

\subsection*{Length}
Given a dimension $n \geq 2$, the length of a curve $\gamma(u)$, $a \leq u \leq b$, in $(\R^2_+,g)$ is
\begin{equation}
\label{eq:length}
L_n(\gamma)= \int_a^b v\ du 
\end{equation}
The length will always be taken with respect to $g$, so we drop the subscript. 

\begin{definition}
We use the notation
	\[
	r_n := \sqrt{2(n-1)}.
	\]
The self-shrinking cylinder $\cyl$ corresponds to the geodesic  $(r(u), x(u)) = (r_n, u)$.
The self-shrinking half-line $\pla$ corresponds to the geodesic defined by $(r(u), x(u) ) = (u,0)$.
\end{definition}

The lengths of $\cyl$  and $\pla$ are respectively 
	\begin{gather*}
	L_{n,\cyl} = \int_{-\infty}^{\infty} r_n^{n-1} e^{-\frac{1}{4} (u^2 + r_n^2)} du = 2 \sqrt{\pi}\, r_n^{n-1} e^{-(n-1)}\\
	L_{n,\pla} = \int_0^{\infty} u^{n-1} e^{-u^2/4} du. 
	\end{gather*}

\begin{proposition}
\label{prop:Plane<Cylinder}

For $n \geq 2$, the cylinder is longer than the plane, i.e.
	\begin{equation}
	\label{eq:Plane<Cylinder}
	L_{n,\pla} < L_{n,\cyl}.
	\end{equation}
\end{proposition}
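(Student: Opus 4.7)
The strategy is to pointwise-majorize the integrand defining $L_{n,\pla}$ by a Gaussian centered at $r_n$ whose peak matches its own maximum. Set $f(u) = u^{n-1} e^{-u^2/4}$ and $\phi(u) = \log f(u) = (n-1)\log u - u^2/4$. Then $\phi'(u) = (n-1)/u - u/2$ vanishes exactly at $u = r_n = \sqrt{2(n-1)}$, so $f$ attains its global maximum
\[
M := f(r_n) = r_n^{\,n-1}\, e^{-(n-1)/2}
\]
there. Evaluating the Gaussian integral $\int_{-\infty}^\infty e^{-u^2/4}\,du = 2\sqrt{\pi}$ shows $L_{n,\cyl} = 2\sqrt{\pi}\,M$, so the inequality reduces to proving $\int_0^\infty f(u)\,du < 2\sqrt{\pi}\,M$.

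For the Gaussian majorization, I would use the uniform second-derivative estimate
\[
\phi''(u) = -\frac{n-1}{u^2} - \frac{1}{2} \leq -\frac{1}{2} \qquad \text{for all } u > 0.
\]
The auxiliary function $\psi(u) := \phi(u) - \log M + \tfrac14(u - r_n)^2$ then satisfies $\psi(r_n) = 0$, $\psi'(r_n) = 0$, and $\psi''(u) = \phi''(u) + \tfrac12 \leq 0$. Since $\psi$ is concave with a critical point at $r_n$, it attains its maximum there, so $\psi \leq 0$ on $(0,\infty)$. Exponentiating gives the pointwise bound
\[
f(u) \leq M\, e^{-(u - r_n)^2/4}, \qquad u > 0.
\]

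Integrating this inequality and translating by $r_n$ yields
\[
L_{n,\pla} \leq M \int_0^\infty e^{-(u-r_n)^2/4}\,du = M \int_{-r_n}^\infty e^{-v^2/4}\,dv < M \int_{-\infty}^\infty e^{-v^2/4}\,dv = 2\sqrt{\pi}\,M = L_{n,\cyl},
\]
the strict inequality coming from discarding the positive tail on $(-\infty,-r_n)$. I don't foresee any serious obstacle; the only non-routine observation is that the global maximum of the integrand occurs precisely at the cylinder's radius $r_n$, which is exactly what makes the Gaussian comparison land on $L_{n,\cyl}$ on the nose.
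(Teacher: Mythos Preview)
Your proof is correct and takes a genuinely different route from the paper's. The paper computes $L_{n,\pla}$ explicitly via integration by parts (obtaining factorials and double factorials for even and odd $n$ separately), then invokes Stirling's approximation $\sqrt{2\pi}\,n^{n+1/2}e^{-n}\le n!\le e\,n^{n+1/2}e^{-n}$ to compare with the closed form for $L_{n,\cyl}$; it also remarks that the inequality follows from Huisken's monotonicity formula. Your argument instead exploits the log-concavity of the integrand: since $(\log f)''=-\tfrac{n-1}{u^2}-\tfrac12\le -\tfrac12$, the log of $f$ is globally majorized by the quadratic tangent at its maximum, yielding the pointwise Gaussian bound $f(u)\le M\,e^{-(u-r_n)^2/4}$. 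This sidesteps the even/odd case split and the Stirling estimates entirely, and is both shorter and more conceptual; it also makes transparent \emph{why} the cylinder length appears as the natural upper bound, namely because the peak of $f$ sits exactly at $r_n$. The paper's approach, on the other hand, gives explicit constants (e.g.\ the ratio $e/(2\sqrt{\pi})<1$ in odd dimensions), which could be useful if one wanted a quantitative gap. One minor note: the paper's displayed formula $L_{n,\cyl}=2\sqrt{\pi}\,r_n^{n-1}e^{-(n-1)}$ contains a typo in the exponent; your value $M=r_n^{n-1}e^{-(n-1)/2}$ is the correct one, as the paper's own later formulas confirm.
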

We give here an explicit proof with computations, although the inequality also follows from Huisken's monotonicity formula in \cite{huisken;asymptotic-behavior}. 
\begin{proof}
Using integration by parts for $L_{n,\pla}$, we obtain for $k \in \mathbb N$
	\begin{align*}
		&L_{2k,\pla} = 2^{2k-1}(k-1)!, &  &L_{2k,\cyl} = 2 \sqrt \pi \left(\frac{2 (2k-1)}{e}\right)^{\frac{2k-1}{2}},\\
	&L_{2k+1, \pla} = \sqrt \pi \  2^k\cdot 3 \cdot 5 \cdots (2k-1), & &L_{2k+1,\cyl} = 2 \sqrt \pi \left(\frac{2 (2k)}{e}\right)^{k}.
	\end{align*}
Stirling's approximation from \cite{babyrudin} or \cite{wikipedia_stirling} now gives 
	\begin{equation}
	\label{eq:Stirling}
	\sqrt{2 \pi}\, n^{n+1/2} e^{-n} \leq n! \leq e  n^{n+1/2} e^{-n}.
	\end{equation}
Because  $e < 2 \sqrt \pi$, we have
	\begin{align*}
	L_{2k,\pla}  \leq 2^{2k-1}  (k-1)^{\frac{2k-1}{2} } e^{-k+2}  =  \frac{e^{3/2}}{2\sqrt \pi} \left(\frac{2k-2}{2k-1}\right)^{\frac{2k-1}{2}} L_{2k, \cyl}  < L_{2k, \cyl}.
	\end{align*}
 We estimate the odd dimensions similarly:
	\begin{align*}
	L_{2k+1, \pla}  = \sqrt \pi \frac{(2k)!}{k!} \leq \frac {\sqrt \pi e^{-2k+1} (2k)^{2k+1/2}}{\sqrt{2\pi} e^{-k} k^{k+1/2}}  \leq \frac{e}{2\sqrt \pi} L_{2k+1, \cyl}  < L_{2k+1, \cyl}. 
	\end{align*}
\end{proof}

\subsection*{Gauss curvature and Gauss-Bonnet formula}

The Gauss curvature $K$ of $(\R^2_+, g_{S})$ is given by
	\[
	K = \lambda^{-2}\left(1 +  \frac{n-1}{r^2} \right).
	\]
\begin{definition}
The enclosed {\bf Gauss area} of a closed curve $\gamma: \mathbb S^1 \to \R^2_+$ is the integral
	\begin{equation}
	\label{eq:GADefinition}
	GA_n(\gamma):=\iint_{\Omega} \left( 1 + \frac{n-1}{r^2}\right) dx dr,
	\end{equation}
where $\Omega$ is the region enclosed by  $\gamma$. 
\end{definition}

For a $C^1$ curve $\gamma$, the Gauss-Bonnet formula may be written as follows
\begin{equation}
\label{eq:GaussBonnet}
 \iint_{\Omega} \left( 1 + \frac{n-1}{r^2}\right) dx dr = 2 \pi - \oint_{\gamma} k_g ds.
\end{equation}


\section{A modified curve shortening flow}
\label{sec:csf}

Given a closed curve in $\R^2_+$, we consider the flow that moves the curve according to the following normal velocity
\begin{equation}
\label{eq:NormalVelocity}
V_g = \frac{k_g}{K},
\end{equation}
where $k_g$ is the geodesic curvature and $K$ is the Gauss curvature at the given point on the curve. When $K$ is positive and uniformly bounded from zero, this weighted flow exhibits properties similar to the usual curve shortening flow. 

The flow was studied by Gage \cite{gage1990} to show the existence of geodesics on spheres via variational methods. The idea is similar here, but our manifold is not closed and the Gauss curvature is not bounded. We will use lines $r=r(t)$ as barriers to show that closed curves in the interior of $\R^2_+$ do not reach regions where the Gauss curvature blows up in finite time.

\subsection*{Evolution of lines $r= C$.}

From  \eqref{eq:GeodesicCurvature}, the lines $r=r_0$ have constant euclidean velocity at all points. Indeed the curvature is 
	\[
	k_g =  \lambda^{-1}  \left( \frac{r_0}{2}  -\frac{n-1}{r_0}  \right)
	\]
The Euclidean speed $V_E$ in the direction $\partial/\partial x$ is  
	\[
	 V_E =\frac{ V_g}{\lambda} =\frac{k_g}{ \lambda K}= \frac{r( r^2 - 2(n-1))}{2 (r^2 + n-1)}.
	\]
Lines on the left side of the cylinder $\cyl$, i.e. $r = r_0 < r_n$, move further to the left. Similarly, lines on the right side of the cylinder $\cyl$ move to the right.  When $r_0 < \rncyl$, we have $V_E \geq -cr $  with $c$ a positive constant depending only on $r_0$ and $n$. Therefore 
	\[
	r(t) \geq r_0 e^{-ct}, \quad r_0 < r_n,
	\]
so no vertical line reaches the $x$-axis in finite time. Similar,  when $r_0 > \rncyl$, we have $r(t) \leq r_0 e^{t/2}$ so no line goes to infinity in finite time either. 

\subsection*{Short-time and long-time existence.}
In the case where the positive Gaussian curvature is uniformly bounded above and below away from zero, the short-time existence of the flow is given by Gage \cite{gage1990} or Angenent \cite{angenent;parabolic-equations-curves1}.  Given an initial embedded closed curve $\gamma_0: \mathbb S^1 \to \R^2_+$, we choose $r_0 <r_n$ and $r_1>r_n$ so that $\gamma_0(\S^1)$ is within the slab $\{ r \in (r_0, r_1)\}$. For $t<1$, the Gauss curvature is then uniformly bounded in $\{ r \in (r_0 e^{-c}, r_1 e^{1/2})\}$ and we can apply the short-time existence results there. 

Some properties of $\gamma_0$ are preserved under the flow: 
\begin{proposition} 
\label{prop:SymmetryGraphical}
Let us denote by  $\gamma_t$ the evolution of $\gamma_0$ at time $t$. 
\begin{enumerate}
\item If $\gamma_0$ is an embedded curve, so is $\gamma_t$. 
\item  If $\gamma_0$ is symmetric with respect to reflections across the $r$-axis and the image of $\gamma_0$ in the first quadrant is a graph over the $r$-axis, the same properties (symmetric and graphical) hold for all the $\gamma_t$ as long as the flow exists. 
	\end{enumerate}
	\end{proposition}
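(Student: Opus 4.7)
The plan is to derive both conclusions from Angenent's parabolic comparison principle for curves \cite{angenent;parabolic-equations-curves1}, which states that for two $C^1$ solutions of a parabolic curvature flow the number of intersections (counted with multiplicity) is nonincreasing in time. The barrier argument from the previous subsection, using the evolving lines $r=r_0$ and $r=r_1$, confines $\gamma_t$ to a slab on any compact time subinterval of its existence; in that slab the Gauss curvature is bounded and uniformly positive, so the flow is uniformly parabolic and Angenent's framework applies.

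For part (1) I would argue by contradiction: let $t_0>0$ be the first time at which $\gamma_{t_0}$ fails to be embedded, so that two disjoint arcs of $\gamma_t$, $t<t_0$, first touch at $t_0$. Treating these two arcs as separate solutions of the flow on a small space-time neighborhood of the contact point and parameterizing them as graphs over a common reference curve, the intersection-counting lemma combined with the strong maximum principle for the resulting linear parabolic equation for their difference forces the touch to be tangential, and then of infinite order, so the two arcs coincide on an open set. This contradicts embeddedness at any earlier time, and is essentially Gage's argument in \cite{gage1990} carried over verbatim to our modified flow.

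For part (2), symmetry is immediate from uniqueness of solutions: since $\lambda(r,x)$ depends on $x$ only through $x^2$, the reflection $R(r,x)=(r,-x)$ is an isometry of $(\R^2_+,g)$, so $R(\gamma_t)$ solves the same flow with initial condition $R(\gamma_0)=\gamma_0$, forcing $R(\gamma_t)=\gamma_t$. For the graphical property I would apply the intersection-counting lemma to $\gamma_t$ together with an evolving vertical line $r=C(t)$ from the preceding subsection. The graphical-plus-symmetric hypothesis on $\gamma_0$ gives at most two intersections (counted with multiplicity) with every vertical line, and by Angenent this bound persists in time. Combined with the symmetry just established, which pairs any upper-half intersection $(C(t),x_+)$ with its mirror $(C(t),-x_+)$, the count along the flow must be $0$ or $2$, so the first-quadrant piece of $\gamma_t$ remains a graph over the $r$-axis.

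The main obstacle is not conceptual but rather technical: making the contradiction argument in (1) fully rigorous requires a careful choice of the two arcs near a putative first contact, a suitable common parameterization, and verification of the Hölder regularity of the coefficients of the linearized flow on the slab. All of this is standard once the slab-boundedness and uniform positivity of the Gauss curvature are in hand, which is exactly why the statement is qualified by ``as long as the flow exists.''
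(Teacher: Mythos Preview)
Your proposal is correct and follows essentially the same route as the paper: both invoke Angenent's intersection-counting theorem for parabolic curve flows, use uniqueness of solutions under the isometry $(r,x)\mapsto(r,-x)$ to preserve symmetry, and compare $\gamma_t$ with evolving vertical lines to preserve the graphical property. The only minor difference is that for (1) the paper cites Angenent's Theorem~1.3 in \cite{angenent;parabolic-equations-curves2} directly, which already covers the self-intersection case, whereas you spell out the underlying first-contact contradiction by hand; this is the same argument at a different level of detail, not a different approach.
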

	\begin{proof}
Both items are direct consequences of Theorem 1.3 \cite{angenent;parabolic-equations-curves2}, which states that the number of intersection points of two curves (or self-intersections of one curve) is non-increasing. For (2), symmetries are preserved because of the uniqueness of the flow. The initial curve $\gamma_0$ intersects each vertical line at most twice and vertical lines evolve into other vertical lines. Symmetry and the vertical line test now imply that the piece of $\gamma_t$ in the first quadrant is graphical over the $r$-axis. 
	\end{proof}

The arc length $ds = \lambda \sqrt{(dx)^2+(dr)^2}$ evolves according to
\[
\frac{\pd}{\pd t} ds = - k_gV_g \, ds = - \frac{k_g^2}{K} \, ds. 
\]
This implies that the length $L_n(\gamma_t)$ evolves by 
\begin{equation}
\label{eq:EvolutionLength}
\frac{d}{dt} L_n(\gamma_t) = -\int_{\gamma_t} \frac{k_g^2}{K}\, ds.
\end{equation}
If the $\gamma_t$'s are simple and closed then they are boundaries of domains $\Omega_t$. Using the Gauss-Bonnet formula \eqref{eq:GaussBonnet}, we get  
%
\begin{align*}
\frac{d}{dt} \iint_{\Omega_t} KdA 
= -\oint_{\gamma_t} K\, V_g\, ds 
= -\oint_{\gamma_t} k_g\,  ds 
= -2\pi + \iint_{\Omega_t} KdA.
\end{align*}
Here $V_g$ and $k_g$ are the normal velocity and geodesic curvature in the direction of the \emph{inward} normal to $\Omega_t$. We have the following proposition. 
	\begin{proposition}
	\label{prop:GaussArea}
	If the Gauss area enclosed by the initial curve is equal to $2 \pi$, then the Gauss area enclosed by $\gamma_t$ is also $2 \pi$ as long as the flow exists. 
	\end{proposition}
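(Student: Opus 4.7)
The plan is to read the proposition off directly from the evolution identity derived in the paragraph immediately preceding the statement, and then invoke uniqueness for a first-order linear ODE. There is essentially nothing new to prove beyond unfolding definitions.

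First I would identify the two quantities at play. Since $dA = \lambda^2\, dr\, dx$ in the metric $g$ and $K\lambda^2 = 1 + (n-1)/r^2$ by the formula for the Gauss curvature, we have $\iint_{\Omega_t} K\, dA = GA_n(\gamma_t)$. Setting $y(t) := GA_n(\gamma_t)$, the chain of equalities displayed just before the proposition reads $y'(t) = y(t) - 2\pi$ on the existence interval of the flow.

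Next I would apply uniqueness of solutions. The function $f(t) := y(t) - 2\pi$ satisfies $f'(t) = f(t)$, and $f(0) = 0$ by hypothesis, hence $f \equiv 0$, i.e.\ $GA_n(\gamma_t) \equiv 2\pi$. The only step that might warrant a second look is the legitimacy of the derivation of $y' = y - 2\pi$ at each time: embeddedness of $\gamma_t$ is preserved by Proposition \ref{prop:SymmetryGraphical}(1), so $\Omega_t$ remains bounded by a single simple closed curve and the Gauss--Bonnet formula \eqref{eq:GaussBonnet} applies; and the identity $\frac{d}{dt}\iint_{\Omega_t} K\, dA = -\oint_{\gamma_t} K V_g\, ds$ is the standard first variation of a weighted area under a smooth normal flow. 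I do not anticipate a real obstacle here, since the substantive computation is already carried out in the excerpt and the conclusion really is just an ODE uniqueness statement.
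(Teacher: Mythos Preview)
Your proposal is correct and matches the paper's approach exactly: the paper does not even include a separate proof, simply stating the proposition as an immediate consequence of the displayed ODE $\frac{d}{dt}\iint_{\Omega_t}K\,dA = -2\pi + \iint_{\Omega_t}K\,dA$. Your additional remarks (identifying $\iint_{\Omega_t}K\,dA$ with $GA_n(\gamma_t)$ and invoking ODE uniqueness) are the natural unpacking of this.
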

	
We are now ready to prove long-time existence.
\begin{proposition}[Long-time existence]
Let $\gamma_0$ be a simple closed curve. If the domain $\Omega_0$ enclosed by $\gamma_0$ satisfies 
	$
	\iint_{\Omega_0} K dA = 2 \pi
	$
then the evolution of $\gamma_0$ with normal velocity $V_g = k_g/K$ exists for all time.  
\end{proposition}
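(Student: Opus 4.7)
The plan is to show that the maximal existence time $T$ is infinite by ruling out every finite-time degeneration available to the flow: $\gamma_t$ approaching the degenerate set $\{r=0\}$, escaping to $r=+\infty$, or developing an interior curvature singularity while staying in a compact region of $\R^2_+$.

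First I would confine $\gamma_t$ to a slab with uniformly bounded geometry on every finite time interval. Since $\gamma_0$ is compact in $\R^2_+$, pick $r_- < r_n < r_+$ with $\gamma_0 \subset \{r_- < r < r_+\}$. The vertical lines $\{r = r_\pm\}$ are \emph{not} stationary: by the computation in the paragraph on the evolution of lines $r=C$, they move outward from $\cyl$ with Euclidean speed $V_E = r(r^2 - 2(n-1))/(2(r^2+n-1))$, and one already has the explicit bounds $r(t) \ge r_- e^{-ct}$ on the left of $\cyl$ and $r(t) \le r_+ e^{t/2}$ on the right. Thus on any compact subinterval $[0,T'] \subset [0,T)$ these barriers remain in a compact slab $S_{T'} = \{\underline r \le r \le \overline r\} \subset \R^2_+$. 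By Angenent's intersection-number theorem (Theorem~1.3 of \cite{angenent;parabolic-equations-curves2}), used exactly as in Proposition~\ref{prop:SymmetryGraphical}, $\gamma_t$ cannot cross the evolving barrier lines, so $\gamma_t \subset S_{T'}$ throughout $[0,T']$.

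Inside $S_{T'}$ the conformal factor $\lambda$ and the Gauss curvature $K$ are both uniformly bounded above and away from zero; the length $L_n(\gamma_t)$ is non-increasing by \eqref{eq:EvolutionLength}; and Proposition~\ref{prop:GaussArea} keeps $GA_n(\gamma_t) = 2\pi$. Because $1 + (n-1)/r^2$ is bounded above and below by positive constants on $S_{T'}$, the Gauss-area identity pinches the enclosed Euclidean area between two positive constants, so $\gamma_t$ cannot shrink to a point inside $S_{T'}$.

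The last step is to invoke Oaks' classification of singularities of normal flows of curves on Riemannian surfaces \cite{oaks;singularities}: on a region where $K$ is uniformly bounded above and away from zero, a smooth embedded closed solution of $V_g = k_g/K$ can only fail to extend if its enclosed region collapses, which we have just excluded. Hence the flow extends past every $T' < T$, forcing $T = \infty$. The main obstacle is not an estimate but a matching step: checking that the hypotheses of Oaks' theorem really are implied by our two-sided bound on $K$ inside the slab together with the embeddedness preserved by Proposition~\ref{prop:SymmetryGraphical}; once that is done, the barrier computation from the subsection on lines $r=C$, the length monotonicity \eqref{eq:EvolutionLength}, and the Gauss-area conservation combine to give the claim directly.
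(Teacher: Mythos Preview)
Your proposal is correct and follows essentially the same route as the paper: confine $\gamma_t$ to a slab via the vertical-line barriers, observe that $K$ is uniformly bounded there so Oaks' dichotomy applies, and use the preserved Gauss area $2\pi$ to exclude collapse to a point. The paper's proof is terser---it leaves the ``cannot shrink to a point'' step implicit in its appeal to Proposition~\ref{prop:GaussArea}---while you spell out the Euclidean-area pinching explicitly, but the structure is the same.
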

\begin{proof}
Oaks proved that a simple closed curve either shrinks to a point in finite time or exists for all time if the velocity $V$ satisfies  $\lambda ^{-1}\leq \pd V / \pd k \leq \lambda$ for a constant $\lambda >0$ \cite[Corollary 6.2]{oaks;singularities}. In our situation, the condition is equivalent to uniform bounds from above and from below away from zero for the Gauss curvature. 

Let us assume for the sake of contradiction that $\gamma_t$ develops a singularity at a finite time $T$. Using vertical lines as barriers as in our argument for short-time existence, we know that $\gamma_t$, $t \in (0,T)$ stays within a slab $\{ r_0 e^{-cT} \leq r \leq r_1 e^{T/2} \}$, where the Gauss curvature is uniformly bounded. Yet $\gamma_0$ develops a singularity under the flow in finite time, which contradicts the result of Oaks.
\end{proof}


\section{A family of initial curves}
\label{sec:rectangles}

We consider rectangles $R[a,b,c]$ with vertices $(a, -c), (a, c), (b,c), (b,-c)$, $a<b$ and $c>0$.

\begin{figure}[htb]
\begin{picture}(120,90)(0,0)
\linethickness{0.2mm}
\put(10,10){\line(0,1){80}}
\put(10,90){\vector(0,1){0.12}}
\linethickness{0.25mm}
\put(10,50){\line(1,0){110}}
\put(120,50){\vector(1,0){0.12}}
\linethickness{0.25mm}
\multiput(40,30)(0,1.95){21}{\line(0,1){0.98}}
\linethickness{0.2mm}
\multiput(40,30)(1.96,0){26}{\line(1,0){0.98}}
\linethickness{0.2mm}
\multiput(90,30)(0,1.95){21}{\line(0,1){0.98}}
\linethickness{0.2mm}
\multiput(40,70)(1.96,0){26}{\line(1,0){0.98}}
\put(15,85){\makebox(0,0)[cc]{$x$}}

\put(115,45){\makebox(0,0)[cc]{}}

\put(115,45){\makebox(0,0)[cc]{}}

\put(45,75){\makebox(0,0)[cc]{}}

\put(115,45){\makebox(0,0)[cc]{$r$}}

\put(35,45){\makebox(0,0)[cc]{$a$}}

\put(95,45){\makebox(0,0)[cc]{$b$}}

\put(3,75){\makebox(0,0)[cc]{$c$}}

\put(1,35){\makebox(0,0)[cc]{$-c$}}

\put(10,70){\makebox(0,0)[cc]{}}

\linethickness{0.2mm}
\linethickness{0.2mm}
\linethickness{0.2mm}
\put(8,70){\line(1,0){4}}
\linethickness{0.2mm}
\put(8,30){\line(1,0){4}}
\end{picture}
\caption{The rectangle $R[a,b,c]$.} 
\end{figure}
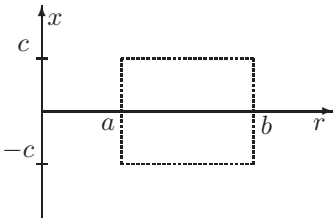
	
If our initial rectangle encloses Gauss area equal to $2 \pi$, the flow will exist for all time. If in addition, its perimeter is less than $2 L_{n, \pla}$, the flow can not converge to a double cover of a plane because it decreases length. There are infinitely many rectangles satisfying both of these conditions.  For example, when $a$ and $b$ are large, the Gauss area $\iint 1 + \frac{n-1}{r^2} dx dr \sim \iint dx dr $ but the perimeter will be small because of the exponential weight on the metric. When $a$ and $b$ are close to $0$, the integral $\int_0^1 \frac{1}{r^2} dr$ is unbounded, so it is possible to have tiny rectangles with Gauss area $2 \pi$ and very small perimeter also. Here, we choose a continuous one parameter family that bridges these two extremes. 

\begin{definition}

We denote by $L(a,b,c)$ the perimeter of the rectangle $R[a,b,c]$:
	\begin{equation}
	\label{eq:PerimeterRectangle}
	L_n(a,b,c) := 2 (a^{n-1} e^{-a^2/4} + b^{n-1} e^{-b^2/4}) \int_0^c e^{-x^2/4}dx + 2 e^{-c^2/4} \int_a^b r^{n-1} e^{-r^2/4} dr 
	\end{equation}
Fix $c_0$ to be the positive real number such that 
	\[
	\frac{e^{-c_0^2/4}}{\int_0^{c_0} e^{-x^2/4}dx}=2.
	\]
\end{definition}
%

The number $c_0$ exists and is unique. It is approximately $0.481$. The number is small enough so that, when $a$ is close to $b$, $L_n(a,b,c_0)$ stays away from $2 L_{n,\pla}$ and yet  large enough so that when $a \to 0$ and $b \to \infty$, there is some room between the perimeter of the rectangle $R[a,b,c_0]$ and $2 L_{n,\pla}$. Our first guess, informed by numerics, was $c=1/2$ but the computations are cleaner with $c_0$. 

\begin{proposition}
\label{prop:PerimeterRectangle}
For every $a,b \in (0, \infty)$ and $n\geq 2$, we have
	\begin{equation}
	\label{eq:PerimeterBounded}
	L_n(a,b,c_0) < 2 L_{n,\pla}.
	\end{equation}
\end{proposition}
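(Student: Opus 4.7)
The plan is to reduce the problem to a single-variable optimization via a separation of variables. Set $f(r) = r^{n-1} e^{-r^2/4}$, $I_0 = \int_0^{c_0} e^{-x^2/4}\, dx$, and $E = e^{-c_0^2/4}$, so the defining relation for $c_0$ reads $E = 2 I_0$. Splitting $\int_a^b = \int_0^b - \int_0^a$ in \eqref{eq:PerimeterRectangle} produces the clean decomposition
\[
L_n(a, b, c_0) = F(a) + G(b), \quad F(a) = 2 I_0 f(a) - 2 E \int_0^a f, \quad G(b) = 2 I_0 f(b) + 2 E \int_0^b f.
\]
Up to the constraint $a < b$, the two-variable maximization therefore splits into two independent one-variable problems.

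Using $E = 2 I_0$, differentiation gives
\[
F'(a) = 2 I_0 \, a^{n-2} e^{-a^2/4} \Bigl[(n-1) - \tfrac{a^2}{2} - 2a\Bigr], \quad G'(b) = 2 I_0 \, b^{n-2} e^{-b^2/4} \Bigl[(n-1) - \tfrac{b^2}{2} + 2b\Bigr].
\]
Each bracket is a quadratic with a single positive root, so $F$ has a unique interior maximum at $a^* = -2 + \sqrt{2n + 2}$ and $G$ at $b^* = 2 + \sqrt{2n + 2}$. Since $a^* > 0$ and $b^* - a^* = 4$ for every $n \geq 2$, the point $(a^*, b^*)$ is admissible, and the boundary behaviors ($a \to 0^+$, $a \to b^-$, $b \to \infty$) are easily checked to be dominated by $F(a^*) + G(b^*)$ --- for instance $b \to \infty$ gives $G(\infty) = 2E L_{n,\pla} \leq G(b^*)$. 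The proposition thus reduces to the single sharp inequality
\[
2 I_0 \bigl[f(a^*) + f(b^*)\bigr] + 2 E \int_{a^*}^{b^*} f(r)\, dr < 2 L_{n, \pla}.
\]

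To establish this, I would first check the low-dimensional base cases $n = 2, 3$ by direct evaluation (for $n = 2$, $\int r e^{-r^2/4}\, dr = -2 e^{-r^2/4}$ is elementary; for $n = 3$ everything reduces to the error function). For $n \geq 4$ I would induct on $n$, stepping from $n$ to $n + 2$ via the recurrence $L_{n+2, \pla} = 2n L_{n, \pla}$ (integration by parts) together with parallel recurrences controlling $f(a^*_{n+2}), f(b^*_{n+2})$, and $\int_{a^*_{n+2}}^{b^*_{n+2}} f$. A Laplace expansion of $f$ near its peak at $r_n = \sqrt{2(n-1)}$ --- where $\log f(r) \approx \log f(r_n) - \tfrac{1}{2}(r - r_n)^2$ and both $r_n - a^*_n$ and $b^*_n - r_n$ tend to $2$ --- suggests that the ratio $L_n(a^*, b^*, c_0)/(2 L_{n, \pla})$ approaches $E\bigl[e^{-2}/\sqrt{2\pi} + 2 \Phi(2) - 1\bigr] \approx 0.95 < 1$, giving comfortable room for the induction step at large $n$.

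The main obstacle is that the inequality is genuinely tight at low $n$: a direct computation gives a ratio around $0.999$ already at $n = 2$, so the base case and the first few steps of the induction leave almost no slack. This is precisely where the \emph{long expansions in power series} promised in the introduction become necessary: one must track several lower-order corrections in the asymptotic analysis of both sides, and verify rigorously that the margin is positive and non-decreasing through every induction step, rather than relying on the comfortable asymptotic ratio alone.
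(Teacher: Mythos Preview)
Your proposal is correct and follows essentially the same route as the paper: the same additive decomposition $L_n(a,b,c_0)=F(a)+G(b)$, the same critical points $a^*=-2+\sqrt{2(n+1)}$ and $b^*=2+\sqrt{2(n+1)}$, numerical verification of base cases, and an induction step $n\mapsto n+2$ built on $L_{n+2,\pla}=2nL_{n,\pla}$ together with power-series expansions. One minor point: in the paper the induction step is only established for $n\ge 3$, so $n=4$ is also taken as a base case; otherwise your outline matches the paper's argument, including your correct anticipation that the hard work lies in the delicate series estimates for the induction step.
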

The proof uses numerical inspection for a few lower dimensions then induction on the dimension from $n$ to $n+2$.
\begin{proof}
We write length of the rectangle $R[a,b,c]$ given in \eqref{eq:PerimeterRectangle}
%
%
%
as $L_n(a,b,c_0) = f_n(a) + g_n(b)$, where $M= \int_{0}^{c_0} e^{-x^2/4}dx$ and
	\begin{align*}
	f_{n}(a) &=  2 a^{(n-1)} e^{-a^2/4} M + 4M \int_a^{0} r^{n-1} e^{-r^2/4} dr \\
	g_{n}(b) &= 2 b^{(n-1)} e^{-b^2/4}  M + 4M \int_{0}^b r^{n-1} e^{-r^2/4} dr.
	\end{align*}
We maximize $f$ and $g$ separately. \\

\noindent {\bf Locating the $\max f$ and $\max g$.} The derivative of $f$ is 
	\[
	f_n'(a) =  -a^{(n-2)} e^{-a^2/4} M  \left(  a^2 +4 a - 2 (n-1) \right).
	\]
So $\max_{a \in [0,\infty)} f_n(a)$ is achieved at $a_n = -2 + \sqrt{2(n+1)}$. Similarly, one can show that $\max_{b \in [0,\infty)} g_n(b)$ is achieved at $b_n = 2 + \sqrt{2(n+1)}$.\\

\noindent  {\bf Base cases: The estimate \eqref{eq:PerimeterBounded} is true for $n=2$, $n=3$ and $n=4$.} We simply verify the estimate \eqref{eq:PerimeterBounded} numerically. 


\begin{claim}[Induction step]
\label{claim:InductionStep}
 	\begin{equation}
	\label{eq:InductionStep}
 	f_{n+2}(a_{n+2}) + g_{n+2}(b_{n+2}) < 2n(f_n(a_n)+g_n(b_n)), \quad n\geq 3
 	\end{equation}
\end{claim}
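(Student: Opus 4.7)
The natural first step is to obtain a recursion expressing $f_{n+2}$ and $g_{n+2}$ in terms of $f_n$ and $g_n$. Applying the identity
\[
\frac{d}{dr}\bigl(r^n e^{-r^2/4}\bigr) = n r^{n-1}e^{-r^2/4} - \tfrac12 r^{n+1}e^{-r^2/4}
\]
gives $\int_0^a r^{n+1}e^{-r^2/4}\,dr = 2n\int_0^a r^{n-1}e^{-r^2/4}\,dr - 2a^n e^{-a^2/4}$. Substituting into the definitions of $f_{n+2}$ and $g_{n+2}$ and grouping, one obtains
\begin{align*}
f_{n+2}(a) &= 2n\, f_n(a) + 2M a^{n-1}e^{-a^2/4}\bigl(a^2+4a-2n\bigr),\\
g_{n+2}(b) &= 2n\, g_n(b) + 2M b^{n-1}e^{-b^2/4}\bigl(b^2-4b-2n\bigr).
\end{align*}

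Because $a_{n+2}^{2}+4a_{n+2}=2(n+1)$ and $b_{n+2}^{2}-4b_{n+2}=2(n+1)$, the bracketed factors reduce to $+2$ at the corresponding critical points. Thus \eqref{eq:InductionStep} is equivalent to
\[
2n\bigl[\,f_n(a_n)-f_n(a_{n+2})\,\bigr] + 2n\bigl[\,g_n(b_n)-g_n(b_{n+2})\,\bigr] > 4M\bigl(a_{n+2}^{n-1}e^{-a_{n+2}^2/4} + b_{n+2}^{n-1}e^{-b_{n+2}^2/4}\bigr),
\]
with the left-hand side positive since $a_n$ and $b_n$ maximize $f_n$ and $g_n$. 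Using the factorizations $a^2+4a-2(n-1)=(a-a_n)(a+a_n+4)$ and $b^2-4b-2(n-1)=(b-b_n)(b+b_n-4)$, together with $2a_n+4=2b_n-4=2\sqrt{2(n+1)}$, the two differences on the left admit the uniform integral representation
\[
M\!\int_0^{\Delta}(r+s)^{n-2}e^{-(r+s)^2/4}\, s\bigl(s+2\sqrt{2(n+1)}\bigr)\,ds,\quad r\in\{a_n,b_n\},
\]
where $\Delta = a_{n+2}-a_n = b_{n+2}-b_n = 4/(\sqrt{2(n+3)}+\sqrt{2(n+1)})$. I would substitute $s = a-a_n$ or $s=b-b_n$ and expand the integrand as a power series in $s$.

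The main obstacle is that the two sides of the reduced inequality match at leading order in $n$. Using $|f_n''(a_n)|=2Ma_n^{n-2}e^{-a_n^2/4}\sqrt{2(n+1)}$ (and the symmetric statement for $g_n$), a quadratic Taylor estimate shows that both sides scale like $\sqrt{n}\, M\bigl(a_n^{n-2}e^{-a_n^2/4}+b_n^{n-2}e^{-b_n^2/4}\bigr)$. Therefore one cannot treat the $f$ and $g$ contributions separately; instead one must expand $(a_n+s)^{n-2}e^{-(a_n+s)^2/4}$ and $(b_n+s)^{n-2}e^{-(b_n+s)^2/4}$ as power series in $s$, whose leading $s$-coefficients (the log-derivatives $(n-2)/r - r/2$ at $r=a_n$ and $r=b_n$) have opposite signs and so produce the necessary cancellation in the sum $f+g$. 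One then expands $a_{n+2}^{n-1}e^{-a_{n+2}^2/4}$ and $b_{n+2}^{n-1}e^{-b_{n+2}^2/4}$ about $a_n$ and $b_n$ to comparable order, and uses the algebraic identities $a_n+b_n=2\sqrt{2(n+1)}$, $\;b_n-a_n=4$, $\;a_nb_n=2(n-1)$ to combine terms. The residual after leading-order cancellation must be shown to have the correct sign for every $n\geq 3$. This is the ``long expansion in power series'' referred to in the introduction, and is also why the base cases $n=2,3,4$ are verified numerically --- the asymptotic analysis is too coarse in low dimensions.
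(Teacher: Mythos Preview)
Your recursion $f_{n+2}(a)=2nf_n(a)+2Ma^{n-1}e^{-a^{2}/4}(a^{2}+4a-2n)$ (and its $g$-analogue) is correct, as is the reduction to
\[
2n\bigl[f_n(a_n)-f_n(a_{n+2})\bigr]+2n\bigl[g_n(b_n)-g_n(b_{n+2})\bigr] > 4M\bigl(a_{n+2}^{n-1}e^{-a_{n+2}^{2}/4}+b_{n+2}^{n-1}e^{-b_{n+2}^{2}/4}\bigr),
\]
and you have correctly diagnosed that the two sides match at leading order, so that the $f$- and $g$-contributions must be combined before any comparison can succeed. But the proposal stops exactly where the work starts: the sentence ``the residual after leading-order cancellation must be shown to have the correct sign for every $n\ge 3$'' \emph{is} the claim. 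You have not written down the expansion, identified which order the sign is decided at, or bounded any remainder. As it stands this is an outline with the main estimate left open.

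The paper organizes the same computation differently, and the reorganization is what makes the final expansion manageable. Rather than evaluating the recursion at $a_{n+2}$, it evaluates at the \emph{intermediate} critical point $a_{n+1}$ (where $f_{n+1}'=0$), so that $f_{n+2}(a_{n+1})=2nf_n(a_{n+1})$ holds exactly and \eqref{eq:InductionStep} becomes a comparison of integrals of $f_n',g_n',f_{n+2}',g_{n+2}'$ over the short intervals $[a_{n+1},a_n]$, $[a_{n+1},a_{n+2}]$, etc. The substitutions $u=s-2$, $v=s+2$ then merge each $f$--$g$ pair into the single function
\[
H_m(s)=e^{-(s-2)^{2}/4}(s-2)^{m}+e^{-(s+2)^{2}/4}(s+2)^{m},
\]
so the cancellation you anticipate is built in \emph{before} any expansion. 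A monotonicity argument for $H_m$ (established in the Appendix) then reduces the integral comparison to a single pointwise inequality at the endpoint $t=0$, namely $r_{n+4}H_n(r_{n+4})<2n\,r_{n+2}H_{n-2}(r_{n+2})$, and only at this stage is the power-series expansion carried out --- on the single symmetric quantity $\mathcal H(s)=(1-2/s)^{m}e^{s}+(1+2/s)^{m}e^{-s}$ rather than on two separate integrands. Your route is not wrong in principle, but without the $H_m$ combination you would be tracking two asymptotic expansions and their cross-cancellations simultaneously, with no reduction to a pointwise statement; that is substantially harder to close.
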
	
\begin{proof}[Proof of the induction step]
By integration by parts, $2n \int_a^b r^{n-1} e^{-r^2/4} dr =  2b^n e^{-b^2/4} - 2a^n e^{-a^2/4} + \int_a^b r^{n+1} e^{-r^2/4} dr$, therefore for all $a, b >0$,
	\begin{gather*}
	2n f_n(a)-2f_{n+1}'(a)- f_{n+2}(a)=0, \\
	2ng_n(b) -2g'_{n+1}(b) - g_{n+2}(b)=0.
	\end{gather*}
Because $f'_{n+1} (a_{n+1})=g'_{n+1}(b_{n+1})=0$, the estimate \eqref{eq:InductionStep} is equivalent to 
	\[
	\int_{a_{n+1}}^{a_{n+2}} f'_{n+2}(u) du + \int_{b_{n+1}}^{b_{n+2}} g'_{n+2}(v) dv < 2n \left(\int_{a_{n+1}}^{a_{n}} f'_{n}(u) du+\int_{b_{n+1}}^{b_{n}} g'_{n}(v) dv \right).
	\]
Using the variables $u=s-2$ and $v=s+2$ and combining like terms, we obtain one more equivalent form:
	\begin{multline}
	\label{eq:LastIntegral2}
	\int_{r_{n+3}}^{r_{n+4}} (2(n+3)-s^2) [e^{-(s-2)^2/4} (s-2)^n + e^{-(s+2)^2/4} (s+2)^n ] ds \\
		<  2n \int_{r_{n+2}}^{r_{n+3}}   (y^2-2(n+1)) [e^{-(y-2)^2/4}(y-2)^{n-2} + e^{-(y+2)^2/4}(y+2)^{n-2} ]dy
	\end{multline}
For $  n \in \mathbb N$, we define the functions
	\[
	h_n (s) =  e^{-s^2/4} s^n, \quad H_n(s) = h_n(s-2) + h_n(s+2),
	\]
 and substitute $s=r_{n+4} -t$ and $y=r_{n+2}+t$ to get yet another equivalent form:
	\begin{multline*}
	\int_0^{r_{n+4}-r_{n+3}} t  (2 r_{n+4} - t) H_n(r_{n+4}-t)dt \\
	< 
	 \int_0^{r_{n+3}-r_{n+2}} 2n\  t ( 2 r_{n+2}+t)  H_{n-2}(r_{n+2}+t) dt.
	\end{multline*} 
Because of concavity, we have $r_{n+4} - r_{n+3} < r_{n+3} - r_{n+2}$. Therefore, in order to prove \eqref{eq:InductionStep}-\eqref{eq:LastIntegral2}, it suffices to compare the integrands pointwise and show
	\begin{equation}
	\label{eq:pointwise}
	 (2 r_{n+4} - t) H_n(r_{n+4}-t) < 2n ( 2 r_{n+2}+t)  H_{n-2}(r_{n+2}+t)
	\end{equation}
 for $0<t<r_{n+4} -r_{n+3}$. We claim that the inequality \eqref{eq:pointwise}
is at its tightest when $t=0$. Indeed, for $n \geq 7$ and $s \in (r_{n+3}, r_{n+5})$, $H_n''(s)$ is positive so $H_n(s)$ is increasing because $H_n'(r_{n+3})>0$ (see Appendix for more detail). For $n \leq 6$, $e^{(s^2+4)/4} H'_n(s)$ is positive by numerical inspection.


The rest of this section is dedicated to proving the following equivalent formulation of the estimate \eqref{eq:pointwise} at $t=0$:
	\begin{equation}
	\label{eq:t=0}
	\ln\left(\mathcal H(r_{n+4})\right) < \ln\left( e \frac{n (n+3)}{(n+1)^2} \left(\frac{n+1}{n+3}\right)^{\frac{n+3}{2}}\mathcal H(r_{n+2})\right) =: \text{I} + \ln(\mathcal H(r_{n+2})),
	\end{equation}
where for $m =  m(s) = \frac{s^2}{2}-3$, 
	$
	\mathcal H(s) :=s^{-m} H_m(s) e^{(s^2+4)/4}= \left(1 - \frac{2}{s}\right)^m e^s + \left(1 + \frac{2}{s}\right)^m e^{-s}.
	$
Proving \eqref{eq:t=0} is a laborious exercise of expanding $\mathcal H$ in series for $s$ large. 	

We recall the power expansion of $\ln$ for $1>y>0$ and use comparison to a geometric series with ratio $y$ to estimate the remainder in \eqref{eq:ln(1-y)}.
	\begin{align}
	\label{eq:ln(1+y)}
	\ln(1+y) &= y -\frac{y^2}{2} + \cdots + (-1)^{j+1} \frac{y^j}{j} + R^+_j(y), &
	 -\frac{y^{2K+2}}{2K+2} \leq R^+_{2K+1}(y) \leq 0, \\
	 	\label{eq:ln(1-y)} 
	\ln(1-y) &= - y - \frac{y^2}{2} - \cdots - \frac{y^ j}{j} + R^-_j(y), &
	- \frac{y^{j+1}}{(j+1)(1-y)} \leq R^-_{j}(y) \leq 0.
	\end{align}
Separating the even and odd powers, we have
	\begin{align*}
\left(\frac{s^2}{2} -3\right) \ln \left(1 -\frac{2}{s} \right) + s 	& = E_{2K-2}(s) + O_{2K-1}(s) + \mathcal R^{-}_{2K-1}(s),\\
	\left(\frac{s^2}{2} -3\right) \ln \left(1 +\frac{2}{s} \right) - s 	& =E_{2K-2}(s) - O_{2K-1}(s)+ \mathcal R^{+}_{2K-1}(s).
	\end{align*}
For the odd powers, we just remark that the function $O_{2K-1}$ is decreasing in $s$. We will need the formulas  for the even powers and the remainder
	\begin{align*}
	E_{2K-2}(s) := &-1+ \sum_{j=1}^{K-1}    \left( \frac{3}{2j} - \frac{2}{2j+2}\right) \left(\frac{2}{s} \right)^{2j} \\
	\mathcal R^{\pm}_{2K-1}(s) :=& - 3 R^{\pm}_{2K-1}(\tfrac{2}{s}) + \frac{s^2}{2} R^{\pm}_{2K+1}(\tfrac{2}{s}).
	\end{align*}
From \eqref{eq:ln(1+y)} and \eqref{eq:ln(1-y)}, we note the (lower) bound for $R_j^-$ dominates, therefore it suffices to consider $\mathcal R^-$. We obtain
	\begin{gather*}
	- \frac{{2}}{(K+1)(s-2)} \left(\frac{2}{s}\right)^{2K-1}	\leq \mathcal R^{-}_{2K-1}(s) \leq   \frac{3}{K(s-2)}\left(\frac{2}{s}\right)^{2K-1}.
	\end{gather*}
Therefore, for $m(s)={\frac{s^2}{2}-3}$, 
	\begin{align*}
		\mathcal H(s) &> 2 \exp\left((E_{2K-2}(s)    - \frac{{2}}{(K+1)(s-2)} \left(\frac{2}{s}\right)^{2K-1}\right) \cosh(O_{2K-1}(s) ),\\
		\mathcal H(s) &<  2 \exp\left(E_{2K-2}(s) + \frac{3}{K (s-2)}\left(\frac{2}{s}\right)^{2K-1}\right) \cosh(O_{2K-1}(s)).
	\end{align*}
Because $\cosh(O_{2K-1}(s)) > \cosh(O_{2K-1}(s +\ve))$, to finish proving \eqref{eq:t=0}, we show that 
	\begin{multline}
	\label{eq:compareE}
	\left(E_{2K-2}(s) - E_{2K-2}(s+\ve)   - \left(\frac{2}{(K+1)} + \frac{3}{K}\right) \frac{1}{(s-2)} \left(\frac{2}{s}\right)^{2K-1} \right)	+ \text{I} >0
	\end{multline}
for $s=r_{n+2}$ and $s+\ve = r_{n+4}$ and where I was defined implicitly in \eqref{eq:t=0}. The values for $s$ and $s+\ve$ are now fixed.

First, we estimate the last term of \eqref{eq:compareE} by expanding in power series and keeping lower order terms in $(n+1)^{-1}$ and $(n+3)^{-1}$. 
	\begin{align*}
	  \text{I} 
	  & >  1 + \left( \frac{1}{n+1} - \frac{2}{(n+1)^2}\right) - \left( \frac{1}{n+1} - \frac{2}{(n+1)^2}\right)^2 \\
	  	& \ \ \ \ \ \ -1  -  \frac{1}{n+3} - \frac{1}{3} \left( \frac{2}{n+3}\right)^2 - \frac{1}{4} \left( \frac{2}{n+3}\right)^3 \frac{(n+3)}{(n+1)}\\
	 & >   \frac{2}{(n+1)(n+3)} - \frac{3}{(n+1)^2} - \frac{4}{3 (n+3)^2} \\ 
	 & >  -\frac{3}{(n+1)(n+3)} - \frac{15}{(n+1)^2(n+3)^2}.
	\end{align*}
The terms of lower order of $E_{2K-2}(s) - E_{2K-2}(s+\ve)$ are explicitly 
	\begin{align*}
	E_{2K-2}(s) - E_{2K-2}(s+\ve) & >\frac{4}{2(n+1)} - \frac{4}{2(n+3)} + \frac{20}{3(2(n+1))^2} - \frac{20}{3(2(n+3))^2} \\
		& = \frac{4}{(n+1)(n+3)} + \frac{20(n+2)}{3(n+1)^2(n+3)^2}.
	\end{align*}
Because $\frac{4}{s} > \frac{1}{(s-2)}$ for $s>\sqrt8$ (i. e. when $n\geq 3$), the error term is  estimated by 
	\[
	- \left(\frac{2}{(K+1)} + \frac{3}{K}\right) \frac{1}{(s-2)} \left(\frac{2}{s}\right)^{2K-1} > -10 K^{-1} (2/s)^{2K}.
	\]
Recalling that $s = r_{n+2}$, we obtain 
	\begin{multline*}
	E_{2K-2}(s) - E_{2K-2}(s+\ve) + I  -  10 \frac{2^{2K} }{K s^{2K}}
		>
		\frac{1}{(n+1)(n+3)}  -  10 \frac{2^{K} }{K (n+1)^{K}},
	\end{multline*}
which is positive for $K$ large enough. This concludes the proof of Claim \ref{claim:InductionStep}.
\end{proof}
This also concludes the proof of Proposition \ref{prop:PerimeterRectangle}.
\end{proof}

For a fixed dimension $n$, we round the corners of the rectangles while keeping the length less then $2 L_{n,\pla}$. This step is not necessary as one still has short-time and long-time existence for Lipschitz initial conditions. It just simplifies the exposition. 
\begin{definition}
Let us define $\mathcal R[a,b,c_0]$ as the rectangle $R[a,b,c_0]$ with rounded corners. Given $n\geq 2$, all the rectangles are rounded off in the same manner and such that $L_n(\mathcal R[a,b,c_0]) \leq 2 L_{n,\pla}$ for all $a,b>0$. 
\end{definition}  

As an immediate corollary, we can extract a one parameter family of  rounded  rectangles with Gauss area $2 \pi$. 
\begin{corollary}
There is a smooth function $\varphi: \R_+ \to \R_+$ with $\varphi(a)>a$ so that  the family of rectangles $\mathcal R[a,\varphi(a),c_0]$ satisfies 
	\[
	GA_n(\mathcal R[a,\varphi(a),c_0]) = 2\pi, \quad L(a,\varphi(a),c_0) < 2 L_{n,\pla}.
	\]
\end{corollary}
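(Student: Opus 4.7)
The plan is to apply the implicit function theorem to the Gauss area functional. Define $F : \{(a,b) : 0 < a < b\} \to \R$ by $F(a,b) := GA_n(\mathcal R[a, b, c_0])$. The standard rounding procedure is chosen once and for all to depend smoothly on the parameters $(a,b)$ (for example, arcs of fixed small radius cut from each corner), so $F$ is a smooth function of $(a,b)$. The goal is to produce $\varphi$ as the implicit solution of $F(a, \varphi(a)) = 2\pi$.

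First I would fix $a > 0$ and study $b \mapsto F(a,b)$. Enlarging $b$ strictly enlarges the enclosed region (modulo a harmless smooth adjustment from the rounding), and the integrand $1 + (n-1)/r^2$ is strictly positive, so $\partial F/\partial b > 0$; in particular, $F(a,\cdot)$ is strictly increasing. As $b \searrow a$ the rectangle degenerates and $F(a,b) \to 0$, while as $b \to \infty$ we have $F(a,b) \geq \iint_{[a,b] \times [-c_0+\delta, c_0-\delta]} dr\, dx = (2c_0 - 2\delta)(b - a) \to \infty$ (where $\delta$ is the fixed rounding cut-off). By the intermediate value theorem there is a unique $b = \varphi(a) > a$ with $F(a, \varphi(a)) = 2\pi$.

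Smoothness of $\varphi$ then follows from the implicit function theorem applied to $F$, using $\partial F/\partial b > 0$. The length estimate $L_n(\mathcal R[a, \varphi(a), c_0]) < 2 L_{n,\pla}$ is immediate from Proposition \ref{prop:PerimeterRectangle} combined with the definition of $\mathcal R[a,b,c_0]$, whose rounding was chosen exactly so that the inequality $L_n(\mathcal R[a,b,c_0]) \leq 2 L_{n,\pla}$ persists for all $a,b > 0$.

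The only genuine obstacle is purely bookkeeping: one must fix the rounding procedure so that it (i) depends smoothly on $(a,b)$ globally, (ii) preserves the perimeter bound of Proposition \ref{prop:PerimeterRectangle} strictly (so that the inequality survives in its strict form), and (iii) makes only a uniformly bounded perturbation to the Gauss area so that the limits $F(a,b) \to 0$ and $F(a,b) \to \infty$ behave as claimed. Since the radius of each corner arc can be chosen arbitrarily small, all three conditions are easily arranged, and the corollary follows.
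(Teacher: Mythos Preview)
Your argument is correct and is exactly the natural way to justify what the paper states without proof: the corollary is asserted as ``an immediate corollary'' of Proposition~\ref{prop:PerimeterRectangle} and the definition of $\mathcal R[a,b,c_0]$, with no further details given. Your implicit function theorem argument---monotonicity of $b\mapsto GA_n(\mathcal R[a,b,c_0])$, the limits $0$ and $\infty$, and then $\partial F/\partial b>0$ for smoothness---is the standard and expected way to make this precise, and the length bound is indeed inherited from Proposition~\ref{prop:PerimeterRectangle} via the choice of rounding. The bookkeeping caveats you raise about the rounding (smooth dependence, preservation of the strict inequality, controlled effect on Gauss area) are the right ones to flag; note in particular that the strict inequality in Proposition~\ref{prop:PerimeterRectangle} is actually uniform in $(a,b)$ since the proof maximizes $f_n+g_n$ at interior points, which makes (ii) straightforward.
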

Note that $\lim_{a\to 0} \varphi(a) = 0$.

\begin{proposition}  Let $\Phi: \R_+ \times \R_+ \to C^{0} (\mathbb S^1, \R^2_+)$ be the map with the following properties:
	\begin{enumerate}
	\item $\Phi(a, 0) = \mathcal R[a,\varphi(a), c_0]$,
	\item For fixed $a$, $\Phi(a, t)$ satisfies the evolution equation \eqref{eq:NormalVelocity}.
	\end{enumerate}
There exists an $a_0 \in \R_+$ so that $\Phi(a_0,t)$ intersects the cylinder for all time $t \in \R_+$. 
\end{proposition}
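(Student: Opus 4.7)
The plan is to argue by connectedness on the parameter $a$, using two facts: the cylinder $\cyl = \{r = r_n\}$ is a geodesic (formula \eqref{eq:GeodesicCurvature} gives $k_g = 0$ on $\cyl$), hence a stationary solution of the flow \eqref{eq:NormalVelocity}; and the flow depends continuously on initial data on any finite time interval. By Angenent's intersection comparison (Theorem 1.3 of \cite{angenent;parabolic-equations-curves2}), already invoked in Proposition \ref{prop:SymmetryGraphical}, the number of intersections of $\Phi(a, t)$ with the stationary curve $\cyl$ is non-increasing in $t$. In particular, if $\Phi(a, t_0)$ is disjoint from $\cyl$, then $\Phi(a, t)$ remains disjoint from $\cyl$ and stays on the same side of $\cyl$ for all $t \geq t_0$.

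First I would introduce the parameter subsets
\[
U := \{a \in \R_+ : \Phi(a, t_0) \subset \{r < r_n\} \text{ for some } t_0 \geq 0\}, \quad V := \{a \in \R_+ : \Phi(a, t_0) \subset \{r > r_n\} \text{ for some } t_0 \geq 0\}.
\]
These are disjoint by the avoidance statement above, since once $\Phi(a, \cdot)$ is strictly on one side of $\cyl$ it must remain so. Each is nonempty: since $\lim_{a \to 0}\varphi(a) = 0$, every sufficiently small $a$ gives $\mathcal R[a, \varphi(a), c_0] \subset \{r < r_n\}$ and hence lies in $U$ with $t_0 = 0$; any $a > r_n$ satisfies $\varphi(a) > a > r_n$, so $\mathcal R[a, \varphi(a), c_0] \subset \{r > r_n\}$ and $a \in V$. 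Both sets are open in $\R_+$: if $\Phi(a, t_0)$ lies at positive Euclidean distance $\delta$ from $\cyl$, then continuous dependence of the flow on initial data, applied on the interval $[0, t_0]$, yields that for $a'$ close to $a$ the curve $\Phi(a', t_0)$ stays within $\delta/2$ of $\Phi(a, t_0)$ and hence on the same side of $\cyl$.

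Since $\R_+$ is connected while $U$ and $V$ are open, disjoint, and nonempty, their union is a proper subset of $\R_+$. Any $a_0 \in \R_+ \setminus (U \cup V)$ satisfies: for every $t \geq 0$, $\Phi(a_0, t)$ is neither entirely in $\{r < r_n\}$ nor entirely in $\{r > r_n\}$. Because $\Phi(a_0, t)$ is connected as the image of $\S^1$, it must meet $\cyl$ at every time, which is the desired conclusion.

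The main technical obstacle is to justify the continuous dependence uniformly in $a'$ on the closed interval $[0, t_0]$ for the $C^{1,1}$ rounded rectangles. This reduces to combining the short-time existence and continuous dependence theory of \cite{angenent;parabolic-equations-curves1, gage1990} with the long-time existence (granted once the enclosed Gauss area is $2\pi$), together with the line barriers $r = \text{const}$ of Section \ref{sec:csf}. The latter confine $\Phi(a', s)$, for $s \in [0, t_0]$ and $a'$ in a neighborhood of $a$, to a uniform slab on which the Gauss curvature is bounded above and away from zero, so that the standard parabolic theory applies.
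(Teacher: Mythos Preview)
Your proof is correct and follows essentially the same route as the paper: define the two open, disjoint subsets of parameters whose curves eventually lie strictly to one side of the stationary cylinder, invoke the maximum principle (avoidance) to see they stay there, and use connectedness of $\R_+$ to find an $a_0$ in neither set. You supply a bit more detail than the paper---explicit nonemptiness of $U$ and $V$, the connectedness-of-the-image argument, and the justification of continuous dependence via the slab barriers---but the structure is identical.
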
	

\begin{proof} The set of curves that do not intersect the cylinder is split into two disjoint sets
 	\begin{align*}
	A_1 & = \{ \text{continuous closed curves } \gamma: \S^1 \to \R^2_+ \mid \gamma(s) < r_n,  s \in S^1\} ,\\
	A_2 & = \{ \text{continuous closed curves } \gamma: \S^1 \to \R^2_+ \mid \gamma(s) > r_n,  s \in S^1\}. 
	\end{align*}
The cylinder $\cyl$ is a self-shrinker so it is stationary under our flow. By the maximum principle, if $\Phi(a,t_0) \in A_i$ for some $i=1,2$ and some time $t_0$, then $\Phi(a, t) \in A_i$ for $t \geq t_0$.
We now consider the following subsets of $\R_+$:
	\begin{align*}
	U_i & = \{ a \in \R_+ \mid \exists t >0, \Phi(a,t) \in A_i\}
	\end{align*}
Both $U_1$ and $U_2$ are open and $U_1 \cap U_2 = \emptyset$. Therefore $U_1 \cup U_2 \neq \R_+$ and we have our claim.
\end{proof}


\section{Convergence to a geodesic}
\label{sec:converge}

We are left to prove that the curves $\gamma_t:= \Phi(a_0,t)$ converge to a shrinking doughnut along a subsequence. This is done in three steps. First, we show that on compact sets, the curves $\gamma_t$ approach a geodesic along a subsequence. Secondly, we recall that each curve $\gamma_t$ restricted to the first quadrant is a graph over $r$ so we study graphical geodesics. Finally, from the behavior of nearby geodesics, we show that the subsequence of curves $\gamma_t$ stay within a compact domain. In this section, the dimension $n$ is fixed. 


\subsection{Convergence to geodesics in compact sets}
We first extract a sequence with decaying total curvature. 

\begin{proposition}
\label{prop:sequenceti} There is a sequence $t_{i}$ so that 
	\begin{equation}
	\label{eq:TotalCurvatureDecay}
	\int_{\gamma_{t_i}\cap E}  |k|\, ds \to 0
	\end{equation}
for any compact subset $E$ of the open half-plane $\mathbb R^2_+$. 
\end{proposition}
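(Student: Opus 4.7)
The plan is to exploit the length-evolution identity \eqref{eq:EvolutionLength} and convert an $L^2$ decay statement for $k_g$ (weighted by $1/K$) into the desired $L^1$ bound on compact sets via Cauchy--Schwarz. Since the length $L_n(\gamma_t)$ is non-increasing and non-negative, integrating
\[
\frac{d}{dt} L_n(\gamma_t) = -\int_{\gamma_t} \frac{k_g^2}{K}\, ds
\]
from $0$ to $\infty$ yields
\[
\int_0^\infty \int_{\gamma_t} \frac{k_g^2}{K}\, ds\, dt \;\leq\; L_n(\gamma_0) < \infty.
\]
Hence one can extract a sequence $t_i \to \infty$ along which
\[
\int_{\gamma_{t_i}} \frac{k_g^2}{K}\, ds \longrightarrow 0.
\]

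Next I upgrade this weighted $L^2$ decay to the claimed $L^1$ bound. Fix a compact set $E \subset \R^2_+$. On $E$, the variable $r$ is bounded away from $0$ and $\infty$ and $x$ is bounded, so $\lambda = r^{n-1} e^{-(x^2+r^2)/4}$ is bounded above and below by positive constants, and therefore the Gauss curvature $K = \lambda^{-2}(1 + (n-1)/r^2)$ satisfies $K \leq M_E$ for some constant $M_E$ depending only on $E$ and $n$. Consequently $k_g^2 \leq M_E\, k_g^2/K$ pointwise on $\gamma_t \cap E$, and Cauchy--Schwarz together with $L_n(\gamma_{t_i}) \leq L_n(\gamma_0)$ give
\[
\int_{\gamma_{t_i} \cap E} |k_g|\, ds \;\leq\; \sqrt{L_n(\gamma_{t_i})}\,\sqrt{\int_{\gamma_{t_i}\cap E} k_g^2\, ds} \;\leq\; \sqrt{L_n(\gamma_0)\, M_E}\,\sqrt{\int_{\gamma_{t_i}} \frac{k_g^2}{K}\, ds},
\]
which tends to $0$ along our chosen subsequence.

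There is no real obstacle here: the proof is essentially a two-line consequence of the monotonicity of length under the flow, together with the observation that the weight $1/K$ is harmless on compact subsets of the open half-plane. The one point that deserves care is verifying that $K$ stays bounded on $E$ (which is where compactness of $E$ inside the \emph{open} half-plane is used to keep $r$ bounded away from $0$); this is exactly the reason the statement is restricted to compact subsets of $\R^2_+$ and does not extend up to the axis $\{r=0\}$, where $K$ blows up.
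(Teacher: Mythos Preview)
Your proof is correct and follows essentially the same route as the paper: integrate the length-evolution identity to obtain finiteness of $\int_0^\infty\!\int_{\gamma_t} k_g^2/K\,ds\,dt$, extract a sequence along which the inner integral tends to zero, then use Cauchy--Schwarz together with the boundedness of $K$ and of the length on compact $E$. The only cosmetic difference is that the paper selects $t_i\in[i,i+1]$ explicitly, whereas you invoke the existence of such a sequence directly from the finiteness of the time integral.
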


\begin{proof}
From the evolution of the length \eqref{eq:EvolutionLength} and the fact that  $k^2/K$ is positive, we have
 	\[
	\int_0^{\infty}\!\! \!\int_{\gamma_t} \frac{k^2}{K}\, ds \, dt \leq L_n(\gamma_0) < \infty
	\]
so that 
	$
	\int_i^{i+1}\!\! \! \int_{\gamma_t} \frac{k^2}{K}\, ds \, dt \to 0
	$
as $i\to \infty$. Therefore there exists a sequence $\{ t_i\}$ with $t_i \in [i, i+1]$ so that 
	\[
	\int_{\gamma_{t_i}} \frac{k^2}{K}\, ds \to 0.
	\]
Because $E$ is compact, the Gauss curvature is uniformly bounded in $E$ so
	\[
	\left(\int_{\gamma_{t_i} \cap E} |k|\, ds\right)^2 \leq L_n(\gamma_{t_i} \cap E) \int_{\gamma_{t_i} \cap E} k^2 ds 
	\]
and we have \eqref{eq:TotalCurvatureDecay} because the length is bounded. 
\end{proof}
From now on, let us use the notation $\gamma_i := \gamma_{t_i}$. 
If $\gamma_{i}$ visits a compact set $E$ frequently then we can extract a subsequence of points $\gamma_{i}(x_i)$ that converges in $E$. The following proposition shows we have $C^1$ convergence of the pieces of curves containing these points.  

\begin{proposition}
\label{prop:C1Convergence} Let $t_i$ be the sequence from Proposition \ref{prop:sequenceti} (or possibly one of its subsequences) and let $E$ be a compact set in $\mathbb R^2_+$. If for some $x_i \in \S^1$, $i \in \mathbb N$,  the sequence $\{\gamma_{i} (x_i)\}$ converges to a point $P \in E$, then there exists a subsequence ${i_j}$ so that the connected component of $\gamma_{i_j} \cap E$ containing $\gamma_{i_j} (x_{i_j}) $ converges in $C^1$ in $E$.
The limit curve contains $P$ and satisfies the geodesic equation in $E$. 
\end{proposition}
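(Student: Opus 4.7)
The plan is to parametrize the relevant arcs by $g$-arc length and treat the geodesic curvature as an $L^1$-small inhomogeneity in the geodesic ODE; a Gronwall comparison then delivers $C^1$ convergence to the actual geodesic with the limiting initial data.

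First I would set up the arc-length picture. Let $C_i$ denote the connected component of $\gamma_i\cap E$ containing $\gamma_i(x_i)$, and parametrize $C_i$ by $g$-arc length $s$ with the base point at $s=0$, so that $\gamma_i:[-a_i,b_i]\to E$. Because $E$ is compact in the open half-plane, the conformal factor $\lambda$ is bounded above and below by positive constants on $E$, so the Euclidean speed $1/\lambda$ is uniformly bounded and the family is uniformly Lipschitz. Since $a_i+b_i\le L_n(\gamma_0)$, I may pass to a subsequence with $a_i\to a^*$ and $b_i\to b^*$, and then use Arzelà--Ascoli to extract a further subsequence along which $\gamma_{i_j}\to\gamma_\infty$ uniformly on $[-a^*,b^*]$. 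The $g$-unit tangent vectors $\gamma_{i_j}'(0)$ lie in a compact subset of $T_P\R^2_+$, so a final subsequence gives $\gamma_{i_j}'(0)\to T_P$ for some $g$-unit vector.

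Next comes the comparison. Writing the geodesic equation \eqref{eq:GeodesicEquation} as a first-order system in $(\gamma,\gamma')$, each $\gamma_i$ satisfies
\[
\frac{d}{ds}(\gamma_i,\gamma_i')=F(\gamma_i,\gamma_i')+\bigl(0,\ k_g(s)\,\mathbf n_g\bigr),
\]
where $F$ is the geodesic vector field, smooth and locally Lipschitz on $\{r>0\}\times T\R^2_+$. Let $\sigma$ be the geodesic with $\sigma(0)=P$ and $\sigma'(0)=T_P$. The initial data for $\gamma_{i_j}$ converge to those of $\sigma$, and by Proposition~\ref{prop:sequenceti} the source satisfies $\int_{-a_{i_j}}^{b_{i_j}}|k_g|\,ds\to 0$. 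Gronwall's inequality then gives $\|\gamma_{i_j}-\sigma\|_{C^1[-a^*,b^*]}\to 0$, proving $C^1$ convergence to a geodesic through $P$ that lies in $E$.

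The main technical point will be running the Gronwall comparison over the full interval $[-a^*,b^*]$ rather than just a small neighborhood of $0$. This requires that $\sigma$ stay in a compact subregion of $\R^2_+$ on which $F$ is uniformly Lipschitz; this is automatic because the $C^0$ limit $\gamma_\infty=\sigma$ inherits the constraint $\gamma_{i_j}\subset E$ from uniform convergence, and $E$ keeps $r$ bounded away from $0$. A minor case to dispatch is the degenerate scenario $a^*=b^*=0$, in which the components collapse to the point $P$ and the statement holds trivially.
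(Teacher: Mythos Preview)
Your overall strategy is sound and leads to a correct proof, but it differs from the paper's argument in one structural respect, and your write-up contains one circular step that needs a standard patch.

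\textbf{The circularity.} In your last paragraph you justify that the comparison geodesic $\sigma$ stays in a compact set (so that $F$ is uniformly Lipschitz) by saying ``the $C^0$ limit $\gamma_\infty=\sigma$ inherits the constraint $\gamma_{i_j}\subset E$.'' But $\gamma_\infty=\sigma$ is precisely what the Gronwall step is supposed to prove, and Gronwall needs $\sigma$ confined to a compact set in the first place. The fix is the usual continuation argument: enlarge $E$ to a compact $E'$ containing $E$ in its interior, let $s^\star=\sup\{s:\sigma([0,s])\subset E'\}$, and note that on $[0,s^\star)$ Gronwall gives $\gamma_{i_j}\to\sigma$ in $C^1$, hence $\sigma(s^\star)\in\overline{E}\subset\mathrm{int}\,E'$; so the set is both open and closed and $\sigma$ stays in $E'$ throughout $[-a^*,b^*]$. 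You should spell this out.

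\textbf{Comparison with the paper.} The paper does not introduce the limiting geodesic $\sigma$ at all. Instead it proves directly that the angle functions $\theta^i$ form a Cauchy sequence: from $C^0$ convergence it gets $\int(\sin\theta^i-\sin\theta^j)\,du$ small, uses the mean value theorem to find a single parameter $u^*$ where $|\theta^i(u^*)-\theta^j(u^*)|$ is small, and then Gronwalls the angle ODE \emph{between $\gamma_i$ and $\gamma_j$}. Because both curves already lie in $E$ by construction, no continuation issue arises. The paper then proves the geodesic property of the limit by a separate contradiction using \eqref{eq:TotalCurvatureDecay}. Your route is arguably more streamlined once the continuation lemma is in place---you get the geodesic property for free since you are converging to an honest geodesic---while the paper's route is more self-contained because the Gronwall comparison never leaves the set where the curves are known to live. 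Either approach is acceptable; just close the loop on the continuation step.
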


Note that on compact sets, the metric $g$ is equivalent to the standard Euclidean metric, so the notion of $C^1$ convergence is the same regardless of the metric. We will use the standard metric. 
\begin{proof}
We first prove convergence in $C^0$ in a slightly bigger set. Let $E'$ be a compact set of $\R^2_+$ that contains a neighborhood of $E$.  Without loss of generality, we can assume that all the curves are parametrized by arclength with respect to $g_E$ and that $\gamma_{t_i}(0)$ converges to $P$. The lengths with respect to $g_E$ of $\gamma_{{i}} \cap E$ are uniformly bounded because $L_n(\gamma_{{i}})$ are bounded. The arcs $\gamma_{{i}} \cap E'$ are then equicontinuous and a subsequence, also denoted by $\gamma_i$ for simplicity, converges uniformly to a continuous arc $\gamma_{\infty}:[c,d] \to E'$. 

We define 
	\[
	 M = \max_{(x,r) \in E'} \left(\left|\frac{n-1}{r} - \frac{r}{2}\right|, \frac{x}{2}\right), \quad \Lambda = \max_{E'} \lambda.
	 \] 
Let $\mathbf t^i_E(u) = \lambda \mathbf t^i_g(u)$  be the unit tangent vector to the curve $\gamma_{i}$ at $u$ and $\theta^i(u)$ the angle $\mathbf t^i_E(u)$ makes with $\partial/\partial r$. 

The point $P$ can now be any point on $\gamma_{\infty} \cap E$. Because of symmetry, we restrict ourselves to the first quadrant. We may reverse the parametrization so that $\gamma_{i}(u)$ is in the first quadrant for all $u \in [-\mu,0]$ for $i$ large enough and $\mu>0$ small enough. Such a constant $\mu$ exists independently of $i$ because all the curves intersect $\cyl$ and enclose a Gauss area of $2 \pi$.  Moreover, we can assume that $\mu$ is smaller than the Euclidean distance between $\pd E'$ and $\pd E$.

We now prove $C^1$ convergence by showing that $\theta^i(0)$ is a Cauchy subsequence.   Suppose $\ve>0$. We pick $\delta <\ve$ so that 
	\begin{enumerate}
	\item $|\sin(\alpha) - \sin(\beta)| \leq \delta \max(\mu^{-1}, 1)$ implies $|\alpha - \beta|  \leq \ve$ if either $\alpha, \beta \in [-\pi/2, \pi/2]$ or $\alpha, \beta \in [\pi/2, 3\pi/2]$,  
	\item$|(r,x) - (\bar r, \bar x)| \leq \delta$ implies $\left|\frac{n-1}{r} - \frac{r}{2}-  \frac{n-1}{\bar r} +\frac{\bar r}{2}\right| \leq \ve$.
	\end{enumerate} 
%
Because of the uniform convergence of $\gamma_{i}$, we have
	\[
	\int_{-\mu}^0 \mathbf t^i_E - \mathbf t^j_E\, du \leq |\gamma_i(a) - \gamma_j (a)| + |\gamma_i(0) - \gamma_j (0)| \leq 2 \delta
	\]
for $i,j$ large enough. 
Restricting to the second component, we have 
	\[
	\frac{1}{\mu} \left| \int_{-\mu}^0 \sin (\theta^i(u)) - \sin (\theta^j(u))\, du \right| \leq  \delta /\mu 
	\]
and by the mean value theorem, there exists a $u^*\in (-\mu,0)$ so that 
	$
	| \sin( \theta^i(u^*) )- \sin (\theta^j(u^*) )| \leq   \delta/\mu
	$
and, from our choice of $\delta$,
	\[
	|\theta^i(u^*) - \theta^j(u^*)| \leq \ve.
	\]
 By equation \eqref{eq:GeodesicCurvature}, we get
	\begin{equation}
	\label{eq:ODEtheta}
	\frac{d\theta^i}{du} = \lambda k^i_g + \left(\frac{n-1}{r^i} - \frac{r^i}{2}\right) \sin\theta^i - \frac{x^i}{2} \cos \theta^i
	\end{equation}
and for $u \in [u^*, 0]$, 
	\begin{align*}
	\left|\theta^i(u) -\theta^j(u)\right| & \leq \left|\theta^i(u^*) -\theta^j(u^*)\right|  + \left|\int_{u^*}^u  (\theta^i)'- (\theta^j)'\, dy\right|   \\
	& \leq 3 \ve + \int_{u^*}^u |\lambda k^i_g(y) | dy+ \int_{u^*}^u |\lambda k^j_g(y) |dy  \\
	&\ \ +\int_{u^*}^u M \big(|\sin(\theta^i(y)) - \sin (\theta^j(y))| + |\cos (\theta^i(y)) - \cos(\theta^j(y)) |\big)\, dy \\
	& \leq  5 \ve +  2 M \int_{u^*}^u |\theta^i(y) - \theta^j(y) |\, dy 
	\end{align*}
The integral form of Gronwall's inequality gives 
	\begin{equation}
	\label{eq:theta}
	|\theta^i (u) -\theta^j(u)| \leq 5 \ve \left(1 +  e^{2M  (u-u^*)}\right) = 5 \ve C 
	\end{equation}
where $C$ depends on $E'$ and $\mu$ only. The sequence of angles $\{ \theta^i\}$ at $P$ is therefore a Cauchy sequence. Because the point $P$ is arbitrary, we have convergence in $C^1$. 

The limit curve is a geodesic. If not, there would exists $u$ and $v$ so that the limit function $\theta^{\infty}$ would satisfy
	\[
	\theta^{\infty}(u) - \theta^{\infty}(v) - \int_u^v h(r^{\infty}(y), x^{\infty}(y), \theta^{\infty}(y)) dy = \ve \neq 0,
	\]
for some $\ve$ and where $h(r,x,\theta) =  \left(\frac{n-1}{r} - \frac{r}{2}\right) \sin\theta - \frac{x}{2} \cos \theta$. 
The $C^1$ convergence combined with \eqref{eq:ODEtheta} would give
	\[
	\int |k_g| ds \geq \left |\theta^{i}(u) - \theta^{i}(v) - \int_u^v h(r^{i}(y), x^{i}(y), \theta^{i}(y)) dy\right| \geq \ve/2
	\]
for all $i$ large enough, in contradiction with  \eqref{eq:TotalCurvatureDecay}. The curve $\gamma_{\infty}$ is therefore a (piece of) geodesic. 
\end{proof}

\subsection{Properties of positive solutions to the graphical geodesic equation} 

When $\gamma = (r, f(r))$ is a graph over the $r$-axis, the geodesic equation is equivalent to 
\begin{equation}
\label{eq:GeodesicEquationf}
\frac{f''}{1+f'^2} =  \left(\frac{r}{2}  - \frac{n-1}{r} \right) f' - \frac{1}{2} f.
\end{equation}



\begin{lemma}
	\label{lem:small_facts_geodesics}
	A positive solution  $f:I \to \R$ to  \eqref{eq:GeodesicEquationf} has the following properties:
	\begin{enumerate}
	\item the function $f$  does not have a local minimum in $I$;
	\item if $f'(\rho_0) \geq 0$ for some $\rho_0 \in (0, r_n)\cap I$, then $f''<0$ in $(\rho_0, r_n) \cap I$; 
	\item if $f'(\rho_0) \leq 0$ for some $\rho_0 \in (r_n,\infty)\cap I$, then $f''<0$ in $(r_n, \rho_0)\cap I$. 
	\end{enumerate}
\end{lemma}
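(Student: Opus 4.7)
The plan is to extract each statement directly from the ODE \eqref{eq:GeodesicEquationf} by a sign analysis, relying on two features of the coefficient $g(r) := \frac{r}{2} - \frac{n-1}{r}$: it is negative on $(0, r_n)$, zero at $r_n$, and positive on $(r_n, \infty)$; moreover $g'(r) = \frac{1}{2} + \frac{n-1}{r^2}$ always exceeds $\frac{1}{2}$. Item (1) will follow immediately: at any local minimum $\rho^*\in I$, one has $f'(\rho^*) = 0$ and $f''(\rho^*) \ge 0$, but plugging into \eqref{eq:GeodesicEquationf} gives $f''(\rho^*) = -\frac{1}{2}f(\rho^*) < 0$, a contradiction since $f$ is positive.

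For item (2), I would first evaluate \eqref{eq:GeodesicEquationf} at $\rho_0$ to get $f''(\rho_0) < 0$: the assumption $f'(\rho_0) \ge 0$ together with $g(\rho_0) < 0$ makes the first term on the right nonpositive, while $-\frac{1}{2}f(\rho_0)$ is strictly negative. Then I would argue by contradiction: let $\rho_1$ be the infimum of points in $(\rho_0, r_n) \cap I$ where $f'' \ge 0$, so that by continuity $f''(\rho_1) = 0$, $f'' < 0$ on $[\rho_0,\rho_1)$, and hence $f'''(\rho_1) \ge 0$. The key computation is to differentiate \eqref{eq:GeodesicEquationf}; at a point where $f''=0$ the ODE itself forces $g(\rho_1)f'(\rho_1) - \tfrac{1}{2}f(\rho_1) = 0$, so most terms drop and one is left with
\[
f'''(\rho_1) = (1+f'(\rho_1)^2)\,f'(\rho_1)\,\bigl(g'(\rho_1)-\tfrac{1}{2}\bigr) = (1+f'(\rho_1)^2)\,f'(\rho_1)\,\frac{n-1}{\rho_1^2}.
\]
The relation $g(\rho_1)f'(\rho_1) = f(\rho_1)/2$ combined with $g(\rho_1) < 0$ and $f(\rho_1) > 0$ forces $f'(\rho_1) < 0$, hence $f'''(\rho_1) < 0$, contradicting $f'''(\rho_1) \ge 0$.

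Item (3) will run the identical argument on $(r_n,\rho_0) \cap I$ traversed from right to left: evaluating \eqref{eq:GeodesicEquationf} at $\rho_0$ gives $f''(\rho_0) < 0$ since now $g(\rho_0) > 0$ and $f'(\rho_0) \le 0$; if $f''$ fails to remain negative I would take $\rho_1$ to be the \emph{supremum} of points in $(r_n,\rho_0)$ where $f'' \ge 0$, so that $f''(\rho_1) = 0$, $f'' < 0$ on $(\rho_1,\rho_0]$, and therefore $f'''(\rho_1) \le 0$. The same $f'''$ formula applies verbatim, but this time $g(\rho_1) > 0$ forces $f'(\rho_1) > 0$, giving $f'''(\rho_1) > 0$ and the desired contradiction. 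The main obstacle is nothing deep but rather the $f'''$ computation: the point to notice is that at a zero of $f''$ the second term of the differentiated ODE also vanishes, leaving only the factor $g'(r) - \tfrac{1}{2} = (n-1)/r^2$, whose positivity is precisely what produces the sign reversal that defeats the extremal choice of $\rho_1$ in both (2) and (3).
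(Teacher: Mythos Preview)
Your proof is correct and follows essentially the same approach as the paper: both arguments hinge on differentiating \eqref{eq:GeodesicEquationf} and observing that at a zero of $f''$ one has $f'''/(1+f'^2) = \frac{n-1}{r^2}f'$, so $f'''$ and $f'$ share a sign there. The paper organizes the case analysis a little differently (first disposing of the region where $f'\ge 0$ directly from the ODE, then invoking the third-derivative identity only once $f'<0$), whereas you go straight to the first zero $\rho_1$ of $f''$ and use the ODE relation $g(\rho_1)f'(\rho_1)=\tfrac12 f(\rho_1)$ to pin down the sign of $f'(\rho_1)$; but the core computation and the contradiction are identical.
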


\begin{proof}
Part(1): By examination of equation \eqref{eq:GeodesicEquationf} we see that $f''$ and $f$ have opposite signs when $f'=0$, and thus $f$ cannot have a positive local minimum.

Parts(2,3): First, notice that equation \eqref{eq:GeodesicEquationf}, along with the positivity of $f$, shows that $f''(r)<0$ whenever $r \in (0,r_n) \cap I$ and $f'(r) \geq 0$. In addition, if $f'(\bar{r}) = 0$ at some $\bar{r} \in I$, then $f'(r)<0$ and $f''(r)<0$ for $r$ close to and greater than $\bar{r}$. Finally, differentiating equation \eqref{eq:GeodesicEquationf} gives the equation for the third derivative
		\[
		\frac{f'''}{1+f'^2} = \frac{2 f' (f'')^2}{(1+f'^2)^2} + \left(\frac{r}{2} - \frac{(n-1)}{r}\right) f'' + \frac{n-1}{r^2} f',
		\]
which shows that $f'''$ and $f'$ have the same sign when $f''=0$. In particular, $f''$ cannot change sign from negative to positive when $f'<0$. 
\end{proof}

As Lemma \ref{lem:small_facts_geodesics} shows, there are many situations where $f$ is concave down. By comparing with the Gauss area of enclosed triangles, we prove the following estimates, which will be used to bound $f$ in Lemma \ref{lem:f_bounded_above}. 
\begin{lemma}
\label{lem:area_triangle}
Let $f: I \to \R$ be a positive function with $f''\leq 0$ on the interval $(\xi_1, \xi_3) \subseteq I $ and such that the Gauss area under its graph is less than $\pi$, i.e. 
	\[
	\int_{\xi_1}^{\xi_3} f(r) \Bigl\{ 1 + \frac{n-1}{r^2}\Bigr\} \, dr \leq \pi. 
	\]
Then for any $\xi_2 \in [\xi_1, \xi_3]$, we have
	\begin{equation}
	\label{eq:GATriangle}
	\text{I}(\xi_2) + \text{II}(\xi_2) + \text{III}(\xi_2) \leq \pi,
	\end{equation}
where 
	\begin{align*}
	\text{I}(\xi_2)&:= {f(\xi_2) ( \xi_3 - \xi_1)}/{2}, \\
	\text{II}(\xi_2)&:=  (n-1){f(\xi_2)} \lim_{r\to \xi_2}   \left( \ln (r/ \xi_1) + {\xi_1}/{r} - 1\right)/({r-\xi_1}),\\
	\text{III}(\xi_2) &:= (n-1) {f(\xi_2)} \lim_{r\to \xi_2}  \left( \ln ({r}/{\xi_3}) + {\xi_3}/{r} - 1\right)/({\xi_3 - r}). 
	\end{align*}

\end{lemma}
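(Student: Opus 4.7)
The plan is to recognize $\text{I}(\xi_2)+\text{II}(\xi_2)+\text{III}(\xi_2)$ as the enclosed Gauss area of the Euclidean triangle $T$ with vertices $(\xi_1,0)$, $(\xi_2,f(\xi_2))$, $(\xi_3,0)$ in the $(r,x)$-half-plane. Once this identification is made, concavity of $f$ together with its positivity will force $T$ to sit inside the subgraph $\{(r,x):\xi_1\le r\le\xi_3,\ 0\le x\le f(r)\}$, and positivity of the Gauss weight $1+(n-1)/r^{2}$ combined with the hypothesis will then yield \eqref{eq:GATriangle} by monotonicity.

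For the identification, I would write $T$ as $\{(r,x):\xi_1\le r\le\xi_3,\ 0\le x\le F(r)\}$, where $F$ is the piecewise-linear hat function equal to $f(\xi_2)(r-\xi_1)/(\xi_2-\xi_1)$ on $[\xi_1,\xi_2]$ and $f(\xi_2)(\xi_3-r)/(\xi_3-\xi_2)$ on $[\xi_2,\xi_3]$. Splitting the Gauss integral over $T$ at $r=\xi_2$ and using the antiderivatives
\[
\int\frac{r-\xi_1}{r^{2}}\,dr=\ln r+\frac{\xi_1}{r},\qquad \int\frac{\xi_3-r}{r^{2}}\,dr=-\frac{\xi_3}{r}-\ln r,
\]
the two Euclidean-area contributions assemble to $f(\xi_2)(\xi_3-\xi_1)/2=\text{I}(\xi_2)$, while the $(n-1)/r^{2}$ contributions reproduce $\text{II}(\xi_2)$ and $\text{III}(\xi_2)$ exactly. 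The limits appearing in the definitions of $\text{II}$ and $\text{III}$ are present only to cover the degenerate cases $\xi_2=\xi_1$ or $\xi_2=\xi_3$, where $T$ collapses to a segment; a one-line Taylor expansion of $\ln(r/\xi_1)+\xi_1/r-1$ around $r=\xi_1$ shows these limits equal $0$, as expected.

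For the containment $T\subseteq\{(r,x):0\le x\le f(r)\}$, the concavity of $f$ on $(\xi_1,\xi_3)$ gives, for $r\in[\xi_1,\xi_2]$,
\[
f(r)\;\ge\;\frac{\xi_2-r}{\xi_2-\xi_1}f(\xi_1)+\frac{r-\xi_1}{\xi_2-\xi_1}f(\xi_2)\;\ge\;\frac{r-\xi_1}{\xi_2-\xi_1}f(\xi_2)\;=\;F(r),
\]
where the second inequality uses $f(\xi_1)\ge 0$; the argument on $[\xi_2,\xi_3]$ is symmetric. Monotonicity of integration against the positive weight $1+(n-1)/r^{2}$, together with the hypothesis, then yields
\[
\text{I}(\xi_2)+\text{II}(\xi_2)+\text{III}(\xi_2)\;=\;\iint_{T}\!\!\Big(1+\tfrac{n-1}{r^{2}}\Big)\,dx\,dr\;\le\;\int_{\xi_1}^{\xi_3}f(r)\Big(1+\tfrac{n-1}{r^{2}}\Big)\,dr\;\le\;\pi.
\]
The argument is essentially a one-picture geometric estimate; the only mildly tedious step is the antiderivative bookkeeping that matches $\text{II}$ and $\text{III}$ to the triangular Gauss area, and this presents no real obstacle.
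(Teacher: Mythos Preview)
Your proposal is correct and follows essentially the same approach as the paper: identify $\text{I}+\text{II}+\text{III}$ as the Gauss area of the triangle $T$ with vertices $(\xi_1,0)$, $(\xi_2,f(\xi_2))$, $(\xi_3,0)$, then use concavity of $f$ to place $T$ under the graph and compare integrals against the positive weight. In fact, you supply more detail than the paper does, explicitly verifying the antiderivatives and the chord inequality $F\le f$.
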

The only reason for the limits is so we can take $\xi_2= \xi_1$ in {\it II} and $\xi_2=\xi_3$ in {\it III}. Note that {\it I} is the Euclidean area of $T$.

\begin{proof}
Because of the convexity of $f$, the region under the graph of $f$ contains the triangle $T$ with vertices $(\xi_1, 0), (\xi_2, f(\xi_2)), (\xi_3, 0)$. The Gauss area in $T$ is given by 
	\begin{multline*}
	 \int_{\xi_1}^{\xi_2} \frac{f(\xi_2)}{\xi_2- \xi_1} (r-\xi_1) \left[ 1 +\frac{(n-1)}{r^2} \right] dr 
		+ \int_{\xi_2}^{\xi_3} \frac{f(\xi_2)}{\xi_3-\xi_2} (\xi_3-r) \left[ 1 +\frac{(n-1)}{r^2} \right] dr,
	\end{multline*}
which gives the left-hand side of \eqref{eq:GATriangle} after integration.   
\end{proof}


\begin{lemma}
\label{lem:f_domain}
If $f: (a, b) \to \R$ is a maximally extended solution to \eqref{eq:GeodesicEquationf}, then $a < r_n < b$.
\end{lemma}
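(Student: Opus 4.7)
The plan is to prove the right endpoint satisfies $b > r_n$; the inequality $a < r_n$ at the left endpoint is established by a symmetric argument (invoking Lemma \ref{lem:small_facts_geodesics}(3) in place of (2) and reversing the sign of the coefficient below). Suppose for contradiction that $b \le r_n$. I will show that $f$ and $f'$ both remain bounded on $(a,b)$, so that the right-hand side of \eqref{eq:GeodesicEquationf} is bounded, $f''$ is bounded, and the ODE solution extends smoothly past $b$, contradicting maximality. Write
\[
C(r) := \frac{r}{2} - \frac{n-1}{r},
\]
so that \eqref{eq:GeodesicEquationf} reads $f'' = (1+f'^2)(C(r) f' - f/2)$, with $C(r) < 0$ on $(0, r_n)$ and $C(r_n) = 0$.

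First I would verify that $f$ is bounded on $(a,b)$. By Lemma \ref{lem:small_facts_geodesics}(1), $f$ has no local minimum, so it is either monotonic or attains a single interior maximum. Lemma \ref{lem:small_facts_geodesics}(2) then forces $f'' < 0$ beyond any point $\rho_0 \in (a,b)$ at which $f'(\rho_0) \ge 0$, and the concavity bound $f(r) \le f(\rho_0) + f'(\rho_0)(r - \rho_0)$ on $(\rho_0, b)$ (finite since $b < \infty$), combined with the hypothesis $f > 0$, yields a uniform bound on $f$ over $(a,b)$.

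The main case $b < r_n$ strictly is handled by a sign argument. Here $C(r) \le C(b) < 0$ near $b$, so if $f'(r) \to -\infty$ as $r \to b^-$ then $C(r) f'(r) \to +\infty$, and boundedness of $f$ gives
\[
\frac{f''}{1+f'^2} \;=\; C(r) f' - \frac{f}{2} \;\to\; +\infty.
\]
Thus $f'' > 0$ on some left neighborhood $(b-\delta, b)$ of $b$, so $f'$ is strictly increasing there, contradicting $f'(r) \to -\infty$. The case $f'(r) \to +\infty$ is excluded in the same way ($f'' \to -\infty$ forces $f'$ to be decreasing), or directly by Lemma \ref{lem:small_facts_geodesics}(2), which bounds $f' \le f'(\rho_0)$. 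Hence $f'$ is bounded, and the ODE extends past $b$, a contradiction.

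The main obstacle is the borderline case $b = r_n$, where $C(r) \to 0$ and the sign argument above degenerates. For this I would use uniqueness of $g$-geodesics together with the fact (recorded in the definition of $\cyl$) that the cylinder $\cyl = \{r = r_n\}$ is itself a $g$-geodesic. If $f'(r) \to -\infty$ as $r \to r_n^-$ (the case $+\infty$ being symmetric), then, because $f$ is bounded, $f$ has a finite limit $x_\ast$, and the curve approaches the point $(r_n, x_\ast) \in \cyl$ with vertical tangent. The only $g$-geodesic through $(r_n, x_\ast)$ with vertical tangent is $\cyl$ itself, so by uniqueness of geodesics our graphical geodesic would have to coincide with $\cyl$ on a neighborhood of $(r_n, x_\ast)$, contradicting that $r$ varies nontrivially on $(a, r_n)$. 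Therefore $f'(r_n^-)$ is finite, $f''$ is bounded at $r_n$, and the solution extends past $r_n$, once again contradicting maximality. The same scheme, with Lemma \ref{lem:small_facts_geodesics}(3) replacing (2) and $C > 0$ on $(r_n, \infty)$, yields $a < r_n$.
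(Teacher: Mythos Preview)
Your argument is correct and follows essentially the same outline as the paper's: rule out $b<r_n$ by observing that the sign of $C(r)f'$ forces $f''$ to oppose any blow-up of $f'$, and rule out $b=r_n$ by noting that a vertical tangency at $(r_n,x_\ast)$ would make the geodesic coincide with the cylinder $\cyl$. The only noteworthy difference is that you import the positivity hypothesis $f>0$ and invoke Lemma~\ref{lem:small_facts_geodesics} to bound $f$ first, whereas the paper does not assume positivity and instead disposes of the case ``$f$ blows up'' by the same sign mechanism (if $|f|\to\infty$ then $f$ and $f'$ eventually share a sign, so both $C(r)f'$ and $-f/2$ push $f''$ against $f'$); this makes the paper's version slightly more self-contained but the two arguments are otherwise the same.
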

\begin{proof}
First, we show if $a<\rncyl$, then $b>\rncyl$. Suppose to the contrary that $a<\rncyl$ and $b \leq \rncyl$. We have that 
	\begin{equation}
	\label{eq:Limf}
	\lim_{x \to b^-} |f'(x)| = \infty.
	\end{equation}
If $b<\rncyl$ and $f$ stays bounded close to $b$, \eqref{eq:Limf} is inconsistent with \eqref{eq:GeodesicEquationf} as $f''(x)$ would have the opposite sign of $f'(x)$ as $x \to b^-$. If $b\leq \rncyl$ and $f$ blows up at $b$, the function $f$ has the same sign as $f'$ and we arrive to the same paradox. If $b = \rncyl$ and $f$ is bounded close to $b$, then the graph of $f(r)$ touches the cylinder $r \equiv \rncyl$ tangentially, which is impossible (because it would have to coincide with the cylinder). We conclude that $b>\rncyl$ when $a<\rncyl$. A similar argument shows $a<\rncyl$ when $b>\rncyl$. Therefore, $\rncyl \in (a,b)$.
\end{proof}

The finite Gauss area and the concavity now guarantee that $f$ is bounded. 

\begin{lemma}
\label{lem:f_bounded_above}
Let $f: [\varepsilon_0, R_0] \to \R$ be a solution to \eqref{eq:GeodesicEquationf} with $r_n \in (\varepsilon_0,R_0)$. Suppose in addition that $f$ is positive on $[\varepsilon_0,R_0]$ and the Gauss area under the graph of $f$ is at most $\pi$. Then
	\begin{equation}
	\label{eq:Boundf_LeftSide}
	f(r) \leq M_1:= \max \Big(f(\ve_0), \frac{2\pi}{\rncyl - \ve_0} \Big), \quad x \in [\varepsilon_0, r_n],
	\end{equation}
and 
	\begin{equation}
	\label{eq:Boundf_RightSide}
	f(r) \leq M_2:= \max \Big(f(R_0), \frac{2\pi}{R_0-\rncyl} \Big), \quad x \in [r_n,R_0].
	\end{equation}
\end{lemma}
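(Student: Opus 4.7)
The two bounds are parallel, so I focus on \eqref{eq:Boundf_LeftSide}; \eqref{eq:Boundf_RightSide} then follows by the symmetric argument, with Part (3) of Lemma \ref{lem:small_facts_geodesics} replacing Part (2). Let $\rho^* \in [\varepsilon_0, r_n]$ be a point where $f$ attains its maximum on $[\varepsilon_0, r_n]$. If $\rho^* = \varepsilon_0$ the bound $f(r) \leq f(\varepsilon_0) \leq M_1$ is immediate, so I assume $\rho^* \in (\varepsilon_0, r_n]$ with $f(\rho^*) > f(\varepsilon_0)$.

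The heart of the argument is to upgrade $f$ to being concave on the full interval $(\varepsilon_0, r_n)$, which I do in two steps. First, I check that the right derivative $f'(\varepsilon_0) \geq 0$: if instead $f'(\varepsilon_0) < 0$, then $f$ drops strictly below $f(\varepsilon_0)$ immediately past $\varepsilon_0$, and since $f$ must climb back up to $f(\rho^*) > f(\varepsilon_0)$ further on, continuity produces a point $\tilde\rho \in (\varepsilon_0, \rho^*)$ with $f(\tilde\rho) = f(\varepsilon_0)$ and $f < f(\varepsilon_0)$ on $(\varepsilon_0, \tilde\rho)$; the minimum of $f$ on $[\varepsilon_0, \tilde\rho]$ is then attained at an interior point, giving a local minimum forbidden by Part (1) of Lemma \ref{lem:small_facts_geodesics}. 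With $f'(\varepsilon_0) \geq 0$ in hand, Part (2) of the same lemma (applied at $\rho_0 = \varepsilon_0 \in (0, r_n) \cap I$) yields $f'' < 0$ throughout $(\varepsilon_0, r_n)$.

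I then invoke Lemma \ref{lem:area_triangle} with $\xi_1 = \varepsilon_0$, $\xi_3 = r_n$, and $\xi_2 = \rho^*$; the Gauss-area hypothesis is inherited from the standing assumption on $f$. The terms $\text{II}(\rho^*)$ and $\text{III}(\rho^*)$ are both non-negative, since $\ln(r/\alpha) + \alpha/r - 1 \geq 0$ for all positive $r,\alpha$, so \eqref{eq:GATriangle} collapses to
\[
\text{I}(\rho^*) = \frac{f(\rho^*)(r_n - \varepsilon_0)}{2} \leq \pi,
\]
yielding $f(\rho^*) \leq 2\pi/(r_n - \varepsilon_0) \leq M_1$, which establishes \eqref{eq:Boundf_LeftSide}. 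The delicate step is the extension of concavity all the way out to the endpoint $\varepsilon_0$ (and analogously to $R_0$ on the right); this is precisely what produces the sharp denominators $r_n - \varepsilon_0$ and $R_0 - r_n$ appearing in the definitions of $M_1$ and $M_2$.
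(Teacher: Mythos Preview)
Your proof is correct and uses the same core ingredients as the paper (Lemma~\ref{lem:small_facts_geodesics} for concavity, Lemma~\ref{lem:area_triangle} for the area-to-height bound), but the decomposition is organized differently. The paper splits into three global cases according to the signs of $f'(\varepsilon_0)$ and $f'(R_0)$: when one endpoint derivative has the ``wrong'' sign the corresponding bound is immediate by monotonicity, and when both have the good sign (Case~3) the paper obtains concavity on the entire interval $[\varepsilon_0, R_0]$ and applies Lemma~\ref{lem:area_triangle} once with $\xi_1=\varepsilon_0$, $\xi_3=R_0$, yielding the stronger estimate $f(\rho_{\max}) \leq 2\pi/(R_0-\varepsilon_0)$ which handles both bounds at once. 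Your approach instead treats the left and right bounds independently: you locate the maximum on $[\varepsilon_0, r_n]$, use the no-local-minimum property to force $f'(\varepsilon_0)\geq 0$ when that maximum is interior, invoke Part~(2) to get concavity only on $(\varepsilon_0, r_n)$, and apply Lemma~\ref{lem:area_triangle} on this half-interval. This is slightly more modular and avoids the three-way split, at the cost of producing only the bound actually stated rather than the sharper one; for the purposes of the lemma the two routes are equivalent.
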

In this proof, we use only the term $I(\xi_2)$ when we invoke Lemma \ref{lem:area_triangle}. 
\begin{proof}
Because $f$ is positive on $[\varepsilon_0,R_0]$, it follows from Lemma \ref{lem:small_facts_geodesics}(1) that $f$ does not have a local minimum on $[\varepsilon_0,R_0]$. In fact, $f''$ will be strictly negative at any point where $f'$ vanishes. Thus, if $f'(\rho_0) \leq 0$, then $f'(r) < 0$ for $r > \rho_0$. Similarly, if $f'(\rho_0) \geq 0$, then $f'(r) > 0$ for $r < \rho_0$.\\

{\bf Case 1: Suppose $f'(\varepsilon_0) \leq 0$.} Then $f$ is decreasing on $(\varepsilon_0,R_0]$ and we can take $M_1 = f(\ve_0)$. Lemma \ref{lem:small_facts_geodesics}(3) implies that $f''(r)<0$ for $r \in [r_n,R_0]$ and Lemma \ref{lem:area_triangle} with $\xi_1=\xi_2=r_n, \xi_3 = R_0$ gives us $f(r_n) \leq 2 \pi/(R_0- r_n)$. Thus, $f(r) \leq 2 \pi / (R_0 - r_n)$ for $r \in [r_n,R_0]$. \\

{\bf Case 2: Suppose $f'(R_0) \geq 0$.} This case is similar to Case 1 but here $f$ is increasing on $[\varepsilon_0,R_0)$. \\

{\bf Case 3: Suppose $f'(\varepsilon_0) >0$ and $f'(R_0) < 0$.} Then $f$ achieves a maximum in $(\varepsilon_0,R_0)$. Let $\rho_{max}$ denote the point where the maximum occurs. It follows from Lemma \ref{lem:small_facts_geodesics} and \eqref{eq:GeodesicEquationf} that $f''<0$ on $[\varepsilon_0,R_0]$ therefore we can apply Lemma \ref{lem:area_triangle} for $\xi_1=\ve_0, \xi_2 = \rho_{\max}$ and $\xi_3=R_0$. We obtain $f(\rho_{max}) \leq 2 \pi/(R_0 -\ve_0)$ therefore \eqref{eq:Boundf_LeftSide} and \eqref{eq:Boundf_RightSide} are true. \end{proof}

When the initial data for $f$  are small (see \eqref{eq:shooting_close}), its graph stays close to the plane $x \equiv 0$. This fact will be used in the next section to show that our curves $\gamma_i$ can not be close to such geodesic and thereby have to stay within a compact set. 

\begin{definition}
\label{def:EpsilonandR}
Let $L_0$ be the length of our initial rectangle $\mathcal R[a_0, \varphi(a_0), c_0]$. Let  $\eta$ be a small constant and $R$ a large constant  so that the graph of any positive function $h$ with domain $(1/R, R)$ and $h \leq \eta$ has length greater than $\frac{1}{2} L_0$. 
\end{definition}
Note that the constants exist because $L_0 < 2 L_{n,\pla}$. 
	\begin{proposition}
	\label{prop:GeodesicCloseToPlane}
	Let $R$ be as in Definition \ref{def:EpsilonandR}. There exists a constant $\delta >0$ so that the maximally extended solution $f:(a,b) \to \R$ to the graphical geodesic equation \eqref{eq:GeodesicEquationf} with initial conditions
		\begin{equation}
		\label{eq:shooting_close}
		0 \leq f(\rho_0) \leq \delta \text{ and } |f'(\rho_0)| \leq \delta, \quad \rho_0 \in (1/2, 2 r_n),
		\end{equation}
	has a domain containing $[-1/R, R]$ and presents one of the following two behaviors:
		\begin{enumerate}
		\item the graph $f([-1/R, R])$ stays above the $r$-axis and has length greater than $\frac{1}{2} L_0$, or
		\item the graph of  $f$ crosses the $r$-axis with finite slope at a point in $(1/R, R)$. 
		\end{enumerate}
	\end{proposition}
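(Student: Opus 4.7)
The plan is to exploit continuous dependence on initial data for the ODE \eqref{eq:GeodesicEquationf}, using the fact that the trivial function $f \equiv 0$ is itself a solution. Recast \eqref{eq:GeodesicEquationf} as the first-order system
\begin{equation*}
f' = p, \qquad p' = (1+p^2)\Bigl[\bigl(\tfrac{r}{2} - \tfrac{n-1}{r}\bigr)p - \tfrac{1}{2}f\Bigr],
\end{equation*}
whose right-hand side is smooth on $\{r>0\}\times\R^2$. Choose $R$ as in Definition \ref{def:EpsilonandR}, enlarging it if necessary so that $R > 2 r_n$ and $1/R < 1/2$, and set $I_R := [1/R, R]$, which then contains $(1/2, 2r_n)$ and in particular every admissible $\rho_0$.

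Next, I would invoke continuous dependence on initial data on the compact interval $I_R$. Because the coefficients of the system are smooth and bounded on $I_R \times K$ for any compact $K \subset \R^2$, and because $(f,p) \equiv (0,0)$ is an exact solution, for any prescribed $\eta>0$ (taking the $\eta$ of Definition \ref{def:EpsilonandR}) there exists $\delta > 0$ such that whenever $\rho_0 \in (1/2, 2r_n)$ and $|f(\rho_0)|, |p(\rho_0)| \leq \delta$, the solution satisfies $|f(r)| + |p(r)| \leq \eta$ for every $r \in I_R$. Uniformity across the compact set of basepoints $\rho_0 \in [1/2, 2r_n]$ follows from continuous dependence on the joint datum $(\rho_0, f(\rho_0), p(\rho_0))$. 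In particular, the solution cannot blow up while confined to this compact region, so its maximal domain contains $I_R$.

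Finally, I would split into the two alternatives. If $f \geq 0$ throughout $I_R$, then $0 \leq f \leq \eta$ on $(1/R, R)$, and Definition \ref{def:EpsilonandR} yields that the length of the graph exceeds $\tfrac{1}{2} L_0$; this is case (1). Otherwise, since $f(\rho_0) \geq 0$, continuity produces a first point $r^\ast \in I_R$ where $f(r^\ast) = 0$ and $f$ is negative immediately beyond. Because $\rho_0 \in (1/2, 2r_n) \subset (1/R, R)$ and $f(\rho_0) \geq 0$, this crossing lies in the open interval $(1/R, R)$, and the slope $f'(r^\ast) = p(r^\ast)$ is bounded by $\eta$, hence finite; this is case (2).

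The main thing to be careful about is the uniformity of $\delta$ over all allowed basepoints $\rho_0$ in $(1/2, 2r_n)$ rather than one fixed basepoint, and the edge case $f(\rho_0) = 0$ with $f'(\rho_0) \neq 0$ (which already falls directly into case (2)) versus $f \equiv 0$ (which trivially satisfies the length bound of case (1) once $\delta$ is taken small relative to the closeness of $L_{n,\pla}$ to $L_0/2$). These are minor bookkeeping issues rather than genuine obstacles; the substance of the argument is just continuous dependence plus the defining property of $R$.
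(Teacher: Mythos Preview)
Your proof is correct and follows essentially the same approach as the paper's own argument: invoke continuous (smooth) dependence on initial data for \eqref{eq:GeodesicEquationf}, compare to the trivial solution $f\equiv 0$ to trap $|f|,|f'|\le \eta$ on $[1/R,R]$, and then read off the dichotomy from Definition~\ref{def:EpsilonandR}. The paper's proof is simply a terser version of what you wrote; your additional remarks on uniformity in $\rho_0$ and the edge cases are welcome explicitness but not a different method.
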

		

\begin{proof}
Thanks to the smooth dependence on initial data for \eqref{eq:GeodesicEquationf}, there exists a $\delta >0$ such that any geodesic $f$ with initial condition \eqref{eq:shooting_close} has a domain containing $(1/R, R)$ and 
	$
	|f(r)| \leq \eta$,  $ |f'(r)| \leq \eta$ for $ r \in (1/R, R)$,
where $\eta$ and $R$ are given in Definition \ref{def:EpsilonandR}. The two possible behaviors follow immediately.
\end{proof}

\subsection{Convergence to a shrinking doughnut}

We will see that if the curves $\gamma_i$ escape compact sets, they would have to converge to one of the geodesics described in Proposition \ref{prop:GeodesicCloseToPlane}, which would lead to a contradiction. 

Let us denote by $a(i)$ and $b(i)$ the two points where $\gamma_i$ intersects the $r$-axis, with the convention
	\[
	a(i)<\rncyl<b(i).
	\]
We can extract convergent subsequences $a(i_j)$ and $b(i_j)$ for which 
	\begin{align*}
	\lim_{j \to \infty} a(i_j) &= a_{\infty}, & a_{\infty} &\in [0, r_n]\\
	\lim_{j \to \infty} b(i_j) &= b_{\infty}, & b_{\infty} &\in [r_n, \infty].
	\end{align*} 
By Proposition \ref{prop:SymmetryGraphical}, the curves $\gamma_{i}$ restricted to the first quadrant are the graphs of functions of $r$. 
\begin{definition}
For each $i \in \mathbb N$, let $f_{i} : (a(i), b(i)) \to [0, \infty)$ be the function such that 
	\[
	\{ (r, f_{i}(r)), r \in (a(i), b(i))\} \subset \gamma_{i} (\S^1)
	\]
\end{definition}
Note that the $f_{i}$'s do not in general satisfy \eqref{eq:GeodesicEquationf}. However, we can extract a subsequence, also denoted by $f_{i}$, that converges in $C^1$ in any compact set to a geodesic $f$ thanks to Proposition \ref{prop:C1Convergence}. 
\begin{lemma}
\label{lem:a_infty}
$a_{\infty} \in (0, \rncyl)$. 
\end{lemma}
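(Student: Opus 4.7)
My plan is a proof by contradiction, ruling out $a_\infty = r_n$ and $a_\infty = 0$ in turn, relying on Proposition \ref{prop:C1Convergence} (which yields $C^1$ subsequential geodesic limits of nearby components), Proposition \ref{prop:GeodesicCloseToPlane} (behavior of small-data geodesics), Proposition \ref{prop:GaussArea} (preserved Gauss area $2\pi$), and the length bound $L_n(\gamma_i) \leq L_0 < 2 L_{n,\pla}$ coming from the length-decreasing property of the flow and the construction of $\gamma_0$.

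To exclude $a_\infty = r_n$, I set $h_{i_j} := f_{i_j}(r_n) > 0$ and pass to a further subsequence with $h_{i_j} \to h_\infty \in [0, \infty]$. If $h_\infty \in (0, \infty)$, apply Proposition \ref{prop:C1Convergence} at $P = (r_n, h_\infty)$: the connected component of $\gamma_{i_j}$ containing $(r_n, h_{i_j})$ must converge in $C^1$ to a smooth geodesic at $P$. However, the left piece of this component — of horizontal width $r_n - a(i_j) \to 0$ yet vertical extent bounded below by $h_\infty/2$ — collapses onto a segment of the cylinder $\{r = r_n\}$ with vertical tangent at $P$, whereas the right piece converges to the graph of a geodesic $f$ with finite slope $f'(r_n)$. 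The incompatibility of tangents at $P$ contradicts the smoothness of the $C^1$ limit. If $h_\infty = 0$, the right-hand limit geodesic satisfies $f(r_n) = 0$, and the reflection symmetry of the $\gamma_{i_j}$ across the $r$-axis forces $f'(r_n) = 0$ as well, so uniqueness for \eqref{eq:GeodesicEquationf} gives $f \equiv 0$, contradicting the preserved Gauss area $2\pi$. If $h_\infty = \infty$, the graph pinches onto the entire upper half of the cylinder, so $\liminf L_n(f_{i_j}) \geq L_{n,\cyl}$ (the cylinder length traversed up and down), while $L_n(f_{i_j}) \leq L_n(\gamma_{i_j})/2 \leq L_0/2 < L_{n,\pla} < L_{n,\cyl}$ by Proposition \ref{prop:Plane<Cylinder}, a contradiction.

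To exclude $a_\infty = 0$, the Gauss area identity for the upper half,
\[
\int_{a(i_j)}^{b(i_j)} f_{i_j}(r)\left(1 + \frac{n-1}{r^2}\right) dr = \pi,
\]
combined with $a(i_j) \to 0$ and the blowup of $(n-1)/r^2$ at zero, forces any $C^1$-subsequential limit geodesic $f$ of the $f_{i_j}$ to have $f(r) \to 0$ as $r$ approaches the left endpoint $a_f$ of its maximal domain, with $a_f$ small. Using smooth dependence on initial data for \eqref{eq:GeodesicEquationf}, I would locate $\rho_0 \in (1/2, 2 r_n)$ at which both $f(\rho_0)$ and $|f'(\rho_0)|$ are below any prescribed $\delta > 0$. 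Proposition \ref{prop:GeodesicCloseToPlane} then yields two exclusive alternatives, both contradictory: option (1) gives $f$'s length on $[1/R, R]$ exceeding $L_0/2$, which by lower semicontinuity of length under $C^1$ convergence forces $\liminf L_n(f_{i_j}|_{[1/R,R]}) > L_0/2$, contradicting $L_n(f_{i_j}) \leq L_0/2$; option (2) gives $f$ crossing the $r$-axis with finite slope in $(1/R, R)$, contradicting the nonnegativity $f \geq 0$ inherited from the positivity of the $f_{i_j}$.

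The principal obstacle is transferring the smallness of $f$ near its left endpoint $a_f$ to a point $\rho_0$ inside the window $(1/2, 2 r_n)$ of Proposition \ref{prop:GeodesicCloseToPlane}. I expect this to follow from continuous dependence of the geodesic ODE on initial data: a geodesic with small value and small derivative at a point $a_f + \ve$ remains close to the zero solution on a uniform interval extending to the right, eventually reaching a $\rho_0$ in the target window at which the small-data hypothesis of Proposition \ref{prop:GeodesicCloseToPlane} can be verified.
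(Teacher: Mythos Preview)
Your treatment of the case $a_\infty = r_n$ is more elaborate than necessary and contains a wrong justification: reflection symmetry across the $r$-axis does not force $f'(r_n)=0$ when $h_\infty=0$; symmetry only fixes the tangent at the crossing points $a(i),b(i)$, not at $r_n$. What actually rules out $h_\infty\in[0,\infty)$ is that the points $(a(i_j),0)$, where $\gamma_{i_j}$ has vertical tangent, converge to $(r_n,h_\infty)$ (or $(r_n,0)$), so the $C^1$ limit must be tangent to the cylinder there---and hence coincide with it, which is too long. The paper collapses all three of your sub-cases into one line: take $K=[r_n-1,r_n+1]\times[-M,M]$ with $M$ large and observe that the limit in $K$ is the double cover of $\mathcal C\cap K$, whose length already exceeds $L_0$.

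The real gap is in the $a_\infty = 0$ case. You correctly isolate the obstacle---getting small data for $f$ at some $\rho_0\in(1/2,2r_n)$---but the proposed fix via continuous dependence does not work. First, from the Gauss-area bound you only obtain $f(r)\to 0$ as $r\to a_f^+$; you have no control on $f'(a_f+\varepsilon)$, and in fact one expects $f'\to+\infty$ there (the approximating curves have vertical tangent at $a(i_j)$). Second, even with small data near $r=a_f$, continuous dependence estimates for \eqref{eq:GeodesicEquationf} degenerate as the starting point approaches $0$ because of the singular coefficient $(n-1)/r$, so you cannot propagate smallness over a ``uniform interval extending to the right'' reaching $(1/2,2r_n)$. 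The paper bypasses both issues: it first traps the graph of $f$ in a tiny box $B_\varepsilon=[\varepsilon,2\varepsilon]\times[0,4\varepsilon]$ of Gauss area exceeding $\pi$, then uses the qualitative concavity from Lemma~\ref{lem:small_facts_geodesics} and the triangle Gauss-area estimate of Lemma~\ref{lem:area_triangle} (with the logarithmic term II) to bound $f(r_n/2)$ directly, and finally extracts a bound on $|f'(r_n/2)|$ from concavity and nonnegativity. These geometric lemmas, not ODE stability, are the missing ingredients.
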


\begin{proof}
We argue by contradiction to exclude the possibilities $a_{\infty} = 0$ and $a_{\infty} = r_n$. \\

\noindent {\bf Case 1: Suppose $a_{\infty} = 0$.} We will show that the limit geodesic $f$ satisfies \eqref{eq:shooting_close} and obtain a contradiction. 
Choose $\ve$ a small positive number so that 
	\[
	\max\left( 8\ve, \frac{16 \ve}{\rncyl}, \frac{\pi}{\rncyl (- \ln(2 \ve) -1)} \right)< \min\left(\frac{\delta}{2}, \frac{1}{2R}\right),
	\]
with $\delta$ is as in Proposition \ref{prop:GeodesicCloseToPlane} and $R$ as in Definition \ref{def:EpsilonandR}. Consider the small box $B_{\ve} = [\ve, 2 \ve] \times [0, 4 \ve] \subset \R^2_+$. It has 
Gauss area larger than $\pi$ so all the $f_{i}$'s intersect $B_{\ve}$. Therefore the graph of $f$ also intersects $B_{\ve}$. In particular, $f(\ve_0) \leq 4 \ve$ for some $\ve_0 \in [\ve, 2 \ve]$ and we choose $M_1$ as in Lemma  \ref{lem:f_bounded_above}. 

Let us place ourselves in the compact set $K =  [\ve_0, 1/\ve_0] \times [0, M_1]$.  

(a) If $f$ is nonincreasing at some point in $B_{\ve} \cap K$, the function $f$ is nonincreasing all the way to $\rncyl$ by Lemma \ref{lem:small_facts_geodesics} (1). In particular, $0 \leq f(r) \leq 8 \ve$  for $r \in [\rncyl/2, \rncyl]$ and by the mean value theorem, 
	\[
	0\leq f(\rho_0) \leq 8 \ve, \quad  |f'(\rho_0)| \leq 16 \ve/\rncyl
	\]
for some  $\rho_0 \in ({\rncyl}/{2},\rncyl)$. By Proposition \ref{prop:GeodesicCloseToPlane}, this would mean the curves $\gamma_i$ converge smoothly in $K$ to a geodesic that is either too long or intersects the $x$-axis with finite slope. In either case, we have a contradiction. 


(b) Suppose now that $f(\ve_0) >0 $ and $f'(\ve_0) > 0$. By Lemma \ref{lem:small_facts_geodesics} (2) and Lemma \ref{lem:f_bounded_above}, we have that $f''<0$ on $(\ve_0, \rncyl)$. We can assume without loss of generality that $f(\rncyl/2) > f(\ve_0)$ (otherwise we are in a situation similar to the one where $f$ is nonincreasing). Lemma \ref{lem:area_triangle} with $\xi_1= \ve_0, \xi_2 =\xi_3= r_n/2$ gives
	\begin{align*}
	\pi \geq   (n-1) \frac{f(\rncyl/2)}{\rncyl/2-\ve_0} \left[\ln\left(\frac{\rncyl}{2\ve_0}\right) + \frac{2\ve_0}{\rncyl}-1\right] \geq \rncyl f(\rncyl/2) [-\ln(2 \ve_0) -1].
	\end{align*}
Hence, 
	$
	f(\rncyl/2) \leq \delta/2
	$
by the definition of $\delta$.
If $f' (\rncyl/2) \geq 0$, the convexity of $f$ and the fact that $f(\ve_0) \geq 0$ give $f'(\rncyl/2) \leq \frac{f(\rncyl/2)}{\rncyl/2-\ve} \leq \delta$. Similarly, if $f' (\rncyl/2)< 0$, the convexity and  $f (\rncyl) \geq 0$ imply $f'(\rncyl/2) \geq -\delta$. In either case, \eqref{eq:shooting_close} is true for $\rho_0 = \rncyl/2$ and leads to a contradiction. Therefore $a_{\infty} \neq 0$. \\

\noindent {\bf Case 2: Suppose $a_{\infty} = \rncyl$.} We consider the compact set $K =  [\rncyl -1, \rncyl+1] \times [-M, M]$ with $M$ so large that the double cover of the cylinder within $K$ has length greater than our initial rectangle. The curves $\gamma_i$ converge to the cylinder in $K$, which would contradict the fact that the length of each $\gamma_i$ is strictly less than the length of the double cover of the cylinder in $K$. 
\end{proof}
\begin{lemma}
\label{lem:b_infty}
$b_{\infty} \in (\rncyl, \infty)$. 
\end{lemma}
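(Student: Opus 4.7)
My plan is to follow the structure of Lemma \ref{lem:a_infty}, arguing by contradiction to exclude $b_\infty = r_n$ and $b_\infty = \infty$.

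To rule out $b_\infty = r_n$, I would adapt Case 2 of the previous lemma. First, I would choose a compact rectangle $K = [r_n - 1, r_n + 1] \times [-M, M]$ with $M$ so large that the length of the double cover of $\cyl$ restricted to $K$ exceeds $L_0$. Because $b(i_j) \to r_n^+$ and, by symmetry, $\gamma_{i_j}$ meets the $r$-axis at $(b(i_j), 0)$ perpendicularly, Proposition \ref{prop:C1Convergence} applied to $K$ forces the connected component of $\gamma_{i_j} \cap K$ through $(b(i_j), 0)$ to converge in $C^1$ to a geodesic through $(r_n, 0)$ with vertical tangent. A direct check of \eqref{eq:GeodesicEquation} at $(r_n, 0)$ shows the coefficient $(n-1)/r_n - r_n/2$ vanishes, so the unique such geodesic is $\cyl$ itself; reflection symmetry of $\gamma_{i_j}$ yields the same conclusion for the lower half. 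Thus $L_n(\gamma_{i_j} \cap K)$ approaches twice the length of $\cyl \cap K$, which exceeds $L_0$, contradicting $L_n(\gamma_{i_j}) \leq L_0$.

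To rule out $b_\infty = \infty$, I plan to show that the limit geodesic $f$ satisfies the conditions \eqref{eq:shooting_close} at $\rho_0 = 3 r_n / 2 \in (1/2, 2 r_n)$ and then invoke Proposition \ref{prop:GeodesicCloseToPlane}. First, I would fix $\ve > 0$ small (to be tuned against $\delta$ from Proposition \ref{prop:GeodesicCloseToPlane} and $r_n$) and consider the box $B = [R_*, R_* + 2\pi/\ve] \times [0, \ve]$ for $R_* > r_n + 2\pi/\ve$. Because $1 + (n-1)/r^2 \geq 1$, the Gauss area of $B$ is at least $2\pi > \pi$; since the first-quadrant region enclosed by $\gamma_i$ has Gauss area $\pi$ and $b(i) \to \infty$, for $i$ large the graph of $f_i$ must enter $B$, yielding $r_i \in [R_*, R_* + 2\pi/\ve]$ with $f_i(r_i) \leq \ve$. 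Applying Proposition \ref{prop:C1Convergence} with a diagonal argument over an exhaustion of $(a_\infty, \infty)$, I would extract a subsequence along which $f_i \to f$ in $C^1$ on compact subsets and $r_i \to r^*$, so that $f(r^*) \leq \ve$.

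Then I would propagate smallness back to $\rho_0 = 3 r_n / 2$. Lemma \ref{lem:small_facts_geodesics}(3) together with the $f'''$-sign argument from the proof of Lemma \ref{lem:f_bounded_above} guarantee $f'' < 0$ on $(r_n, r^*)$ (assuming $f > 0$ there; otherwise uniqueness forces $f \equiv 0$ and \eqref{eq:shooting_close} holds trivially). Then Lemma \ref{lem:area_triangle} with $\xi_1 = r_n$, $\xi_3 = r^*$ and the inherited Gauss-area inequality $\int_{r_n}^{r^*} f(r)(1 + (n-1)/r^2) \, dr \leq \pi$ yields $f(\xi_2) \leq 2\pi/(r^* - r_n) \leq \ve$ for every $\xi_2 \in [r_n, r^*]$. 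In particular $f(3 r_n / 2) \leq \ve$, and concavity together with the mean value theorem on subintervals of $(5 r_n/4, 7 r_n/4)$ bounds $|f'(3 r_n /2)|$ by a multiple of $\ve/r_n$. With $\ve$ chosen small enough, both conditions of \eqref{eq:shooting_close} hold at $\rho_0 = 3 r_n / 2$. Proposition \ref{prop:GeodesicCloseToPlane} then forces one of two alternatives for the maximally extended $f$: either $f$ stays positive on $[1/R, R]$ with $g$-length greater than $L_0/2$, in which case $C^1$-convergence of $f_i$ together with reflection symmetry force $L_n(\gamma_i) > L_0$, contradicting length monotonicity; or $f$ crosses the $r$-axis with finite slope at some interior point, contradicting $f \geq 0$ inherited from $f_i \geq 0$.

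The hardest step is the smallness propagation. Because $f_i$ is not itself a geodesic, the concavity tools of Section \ref{sec:converge} apply only after extracting the $C^1$ limit $f$; I must verify that the Gauss-area inequality persists in the limit so that the triangle comparison in Lemma \ref{lem:area_triangle} is available on $[r_n, r^*]$, and also check that the first-quadrant graphs of $f_i$ cover a sufficient portion of $[1/R, R]$ (using $a_\infty < r_n$ from Lemma \ref{lem:a_infty}) for the length alternative of Proposition \ref{prop:GeodesicCloseToPlane} to translate into a genuine length contradiction on $\gamma_i$.
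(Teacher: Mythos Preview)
Your outline is essentially the paper's argument. The case $b_\infty = r_n$ is handled in the paper by the same cylinder double-cover length contradiction you sketch (this is the ``analogous to Lemma \ref{lem:a_infty}'' remark). For $b_\infty = \infty$, the paper uses the box $B_\ve = [\ve^{-1}, 2\ve^{-1}] \times [0,4\ve]$ and the test point $\rho_0 = 2r_n$ in place of your $[R_*, R_*+2\pi/\ve]\times[0,\ve]$ and $\rho_0 = 3r_n/2$, but the mechanism is identical: a far-out box of Gauss area exceeding $\pi$ forces the graph of $f_i$ (hence of the limit geodesic $f$) to dip low there, then concavity and Lemma \ref{lem:area_triangle} propagate smallness back to a point in $(1/2, 2r_n)$, and Proposition \ref{prop:GeodesicCloseToPlane} yields the contradiction.

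One point you should tighten: your invocation of Lemma \ref{lem:small_facts_geodesics}(3) to get $f'' < 0$ on $(r_n, r^*)$ presupposes $f'(\rho) \leq 0$ at some $\rho \in (r_n, r^*]$, which you have not verified. The paper handles this by an explicit case split: if $f' \geq 0$ at some point of the far box, then by the no-local-minimum property $f$ is nondecreasing on $(r_n,\text{that point})$, so $f$ is already small on that whole interval and the mean value theorem produces a $\rho_0$ with small slope directly (this is the ``as in Case 1(a)'' clause). Only in the remaining case $f' < 0$ throughout the box does Lemma \ref{lem:small_facts_geodesics}(3) apply and feed into the triangle comparison. Your triangle-area step and derivative bound via concavity go through once this split is made; without it the concavity claim is not yet justified.
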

\begin{proof}
The proof is analogous to the one for $a_{\infty}$. Choose $\ve< R^{-1}$ so that 
	\[
	\frac{2 \pi}{r_n (2 \ve^{-1} - r_n)} \leq \delta
	\]
where $R$ is as in Definition \ref{def:EpsilonandR}. Consider the box $B_{\ve} = [\ve^{-1}, 2 \ve^{-1}] \times [0, 4\ve]$, which has Gauss area greater than $2 \pi$. As in Case 1(a) of Lemma \ref{lem:a_infty}, $f'\geq 0$ for some point in $B_{\ve}$  leads to a contradiction. Therefore $f(2\ve^{-1})>0$,  $f'(2 \ve^{-1}) <0$, and we have $f''<0$ on $(\rncyl, 2 \ve^{-1})$ by Lemma \ref{lem:small_facts_geodesics} (3). We apply Lemma \ref{lem:area_triangle} with $\xi_1= r_n, \xi_2 = 2 r_n, \xi_3 = 2 \ve^{-1}$ and obtain
	\[
	2 \pi \geq  f(2 r_n) (2 \ve^{-1} -  \rncyl). 
	\]
If $f'(2 \rncyl) \leq 0$, we can take $\rho_0 = 2\rncyl$ in \eqref{eq:shooting_close}. If $f'(2 \rncyl)>0$, we have $f'(2\rncyl) \leq f(2\rncyl)/\rncyl \leq \delta$. In either case, the curves $\gamma_i$ converge in $K = [1/R, R] \times [0,M_2]$ to a geodesic with initial condition \eqref{eq:shooting_close}, which gives us again a contradiction. 
\end{proof}

Consider now the compact set 
	\[
	K = [a_\infty-1, b_{\infty} + 1] \times [-M, M],
	\]
where $M$ is as in Lemma \ref{lem:f_bounded_above} with $\ve_0 = a_{\infty}$ and $R_0 = b_{\infty}$.  
Our subsequence $\gamma_{i}$ converges to a geodesic $\gamma_{\infty}$ in $K$. Because the intersections of the $\gamma_{i}$'s with the $r$-axis are eventually in $K$, the intersections of $\gamma_{\infty}$ with the $r$-axis are in $K$. The curve $\gamma_{\infty}$ is a connected $C^1$ curve that satisfies the geodesic equation in $K$, therefore it is smooth. It is curve generating the promised self-shrinking doughnut.

\bigskip


\section{Appendix}


Let us recall that $
	h_n (s) =  e^{-s^2/4} s^n$ and $H_n(s) = h_n(s-2) + h_n(s+2)$.

\begin{proposition} The inequality 
	\begin{equation}
	\tag{\ref{eq:pointwise}}
	 (2 r_{n+4} - t) H_n(r_{n+4}-t) < 2n ( 2 r_{n+2}+t)  H_{n-2}(r_{n+2}+t)
	\end{equation}
 for $0<t<r_{n+4} -r_{n+3}$ and $n \geq 7$ is at its worst when $t=0$, i.e. 
 \begin{gather*}
	(2 r_{n+4} - t) H_n(r_{n+4}-t)  \leq 2 r_{n+4} H_n(r_{n+4}) \\
	4n r_{n+2}  H_{n-2}(r_{n+2})  \leq 2n ( 2 r_{n+2}+t)  H_{n-2}(r_{n+2}+t) 
	\end{gather*}

 \end{proposition}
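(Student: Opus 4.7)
The plan is to derive both inequalities from a single monotonicity statement: $H_m$ is nondecreasing on $(r_{m+3}, r_{m+5})$ for every integer $m \geq 7$. Applying this with $m = n$ yields the first inequality, because as $t$ increases from $0$ to $r_{n+4} - r_{n+3}$ the argument $r_{n+4} - t$ traverses $(r_{n+3}, r_{n+4})$, so both $(2 r_{n+4} - t)$ and $H_n(r_{n+4} - t)$ decrease and their product does too. Applying it with $m = n-2$ yields the second: by concavity of $k \mapsto \sqrt{2(k-1)}$ one has $r_{n+4} - r_{n+3} < r_{n+3} - r_{n+2}$, so $r_{n+2} + t$ stays in $(r_{n+2}, r_{n+3}) \subset (r_{(n-2)+3}, r_{(n-2)+5})$, and both $(2 r_{n+2} + t)$ and $H_{n-2}(r_{n+2} + t)$ increase in $t$.

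To establish the monotonicity of $H_m$ on $(r_{m+3}, r_{m+5})$ I would combine (a) the sign $H_m'(r_{m+3}) > 0$ at the left endpoint with (b) the convexity $H_m'' > 0$ on the whole interval, so that $H_m'$ stays positive throughout. For (a), the derivative simplifies dramatically at $s = r_{m+3}$. From $h_m'(s) = s^{m-1} e^{-s^2/4}(m - s^2/2)$ together with the identity $m - (r_{m+3} \pm 2)^2/2 = \mp 2(r_{m+3} \pm 2)$, which is a direct consequence of $r_{m+3}^2 = 2(m+2)$, one obtains
\[
H_m'(r_{m+3}) \;=\; 2\, h_m(r_{m+3} - 2)\; -\; 2\, h_m(r_{m+3} + 2).
\]
Positivity is equivalent to $2 r_{m+3} > m \ln\!\bigl((r_{m+3}+2)/(r_{m+3}-2)\bigr)$, which I would verify by expanding $\operatorname{arctanh}(2/r_{m+3})$ as a power series, substituting $r_{m+3}^2 = 2(m+2)$, and controlling the tail with a geometric-series bound in the style of Proposition \ref{prop:PerimeterRectangle}. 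The leading terms cancel exactly, and the first correction has the correct sign and magnitude for $m \geq 7$.

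For (b), write $h_m''(s) = h_m(s)\, Q_m(s)$ where
\[
Q_m(s) \;=\; \frac{m(m-1)}{s^2} + \frac{s^2}{4} - m - \tfrac{1}{2}
\]
has roots, in the variable $s^2$, at $2m + 1 \pm \sqrt{8m+1}$. I would then show that for every $s \in (r_{m+3}, r_{m+5})$, $(s-2)^2$ lies below $2m+1 - \sqrt{8m+1}$ and $(s+2)^2$ lies above $2m+1 + \sqrt{8m+1}$; this makes both $h_m''(s-2)$ and $h_m''(s+2)$ positive and hence $H_m'' > 0$.

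The main obstacle is the lower comparison at the unfavorable endpoint $s = r_{m+5}$, which reduces to $11 + \sqrt{8m+1} < 4\sqrt{2m+8}$; squaring yields a quadratic in $m$ that turns positive precisely at the cutoff $m = 7$ claimed in the hypothesis, so the threshold is sharp. The other three comparisons are looser radical inequalities that hold with room to spare. Finally, the application of the monotonicity at $m = n-2$ requires $n \geq 9$; the remaining cases $n = 7, 8$ of the second inequality are covered by the numerical monotonicity of $H_5$ and $H_6$ already recorded in the main text for $n \leq 6$.
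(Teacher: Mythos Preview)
Your proposal is correct and follows essentially the same route as the paper: reduce both displayed inequalities to the single monotonicity statement that $H_m$ is increasing on $(r_{m+3},r_{m+5})$, then prove this by showing $H_m''>0$ there (via the location of the roots of the quartic factor in $h_m''$) together with $H_m'(r_{m+3})>0$ at the left endpoint. Your computation $11+\sqrt{8m+1}<4\sqrt{2m+8}$ is exactly the paper's condition $r_{n+5}-2<s_0$, and your observation that the second inequality needs $m=n-2\ge 7$, with $n=7,8$ handled by the numerical check of $H_5',H_6'$ in the main text, matches the paper's logic.

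The one genuine variation is in step~(a). The paper proves $H_n'(r_{n+3})>0$ by writing the ratio $Q_n=-h_n'(r_{n+3}+2)/h_n'(r_{n+3}-2)$, recasting it as $e^{-2r_{n+3}}\bigl(1+4/(r_{n+3}-2)\bigr)^{(r_{n+3}^2-4)/2}$, and bounding this using the monotone limit $(1+a/x)^{x^2}\nearrow e^{ax-a^2/2}$; this forces the numerical check at $n=7$. Your $\operatorname{arctanh}$ expansion is a bit sharper: writing $m=(r^2-4)/2$ with $r=r_{m+3}$, the first two terms of $m\,\operatorname{arctanh}(2/r)$ already sum to $r-8/(3r)-16/(3r^3)$, and a geometric tail bound gives the remainder $\le 16/(5r^3)$, so $m\,\operatorname{arctanh}(2/r)<r$ for every $r>2$ without a separate numerical case. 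Either argument suffices.
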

\begin{proof}
We have
	\begin{align*}
	h_n (s) & =  e^{-s^2/4} s^n, \\
	h'_n(s) & = e^{-s^2/4} s^{n-1} \left( n - \tfrac{1}{2} s^2 \right),\\
	h''_n(s) & = e^{-s^2/4} s^{n-2} \left( n(n-1) - \left(n+ \tfrac{1}{2}\right) s^2 + \tfrac{1}{4} s^4 \right).
	\end{align*}
By the quadratic formula, $h''(s) \geq 0$ if and only if $s^2 \leq s_0^2:= 2n+1 - \sqrt{8n+1}$ or $s^2 \geq s_1^2:= 2n+1 + \sqrt{8n+1}$. For $H''_n$, we are concerned about $s \in (r_{n+3} - 2, r_{n+5} -2) \cup (r_{n+3}+2, r_{n+5}+2)$  and see that 
	\[
	r_{n+5} - 2 < s_0, \quad s_1< r_{n+3}+2
	\]
for $n \geq 7$. So at least $H''_n (s) >0$ for $s \in (r_{n+3}, r_{n+4})$ and $n \geq 7$.  We will now show that $H'_n(r_{n+3})>0$, or equivalently, that 
	\[
	Q_n:= \frac{-h'_n(r_{n+3} +2)}{h'_n(r_{n+3}-2)}<1. 
	\]
Rearranging terms, we have
	\begin{equation}
	\label{eq:QAgain}
	 Q_n = e^{-2 r_{n+3}} \left(\frac{r_{n+3}+2}{r_{n+3}-2}\right)^n =e^{-2 r_{n+3}} \left(1+ \frac{4}{r_{n+3}-2}\right)^{(r_{n+3}^2-4)/2}.
	 \end{equation}
One can prove by taking $\log$, limits, and derivatives that
	\begin{align*}
	f(x) &= \left(1+\frac{a}{x}\right)^{x^2} \nearrow e^{ax}e^{-\frac{a^2}{2}}, \quad a>0\\
	g(x) &= \left(1-\frac{a}{x}\right)^{x^2} \searrow e^{-ax}e^{-\frac{a^2}{2}}, \quad a>0,
	\end{align*}
as $ x \to \infty$. The last factor in \eqref{eq:QAgain} can be estimated by 
	\[
	 \left(1+ \frac{4}{r_{n+3}-2}\right)^{\frac{(r_{n+3}-2)^2}{2}+2(r_{n+3}-2) + 4}   \leq  e^{2(r_{n+3}-2) } \left(1+ \frac{4}{r_{n+3}-2}\right)^4
	\]
and $Q_n<1$ for $n \geq 8$ because $1+ \frac{4}{r_{n+3}-2} < e$ when $n \geq 8$. We have $Q_7 <1$ by checking numerically from the definition. 
\end{proof}

\bigskip


\def\cprime{$'$}

\end{document}